\def\latex/{{\protect\LaTeX}}
\def\latexe/{{\protect\LaTeXe}}
\def\amslatex/{{\protect\AmS-\protect\LaTeX}}
\def\tex/{{\protect\TeX}}
\def\amstex/{{\protect\AmS-\protect\TeX}}
\def\bibtex/{{Bib\protect\TeX}}
\def\makeindx/{\textit{MakeIndex}}
\theoremstyle{plain} 
\newtheorem{thm}{Theorem}[section]
\newtheorem{prop}[thm]{Proposition}
\newtheorem{cor}[thm]{Corollary}
\theoremstyle{definition}
\newtheorem{chunk}[thm]{\hspace*{-1.065ex}\bf}
\newtheorem{lem}[thm]{Lemma}
\newtheorem{eg}[thm]{Example}
\newtheorem{ques}[thm]{Question}
\theoremstyle{remark}
\newtheorem*{claim*}{Claim}
\newcommand{\ZZ}{\mathbb{Z}}
\newcommand{\QQ}{\mathbb{Q}}
\newcommand{\fn}{\mathfrak{n}}
\newcommand{\fm}{\mathfrak{m}}
\newcommand{\fp}{\mathfrak{p}}
\newcommand{\ltensor}{\tensor^{\bf{L}}}
\newcommand{\RHom}{\operatorname{RHom}}
\newcommand{\tensor}{\otimes}
\DeclareMathOperator{\ann}{ann}
\DeclareMathOperator{\Tor}{Tor}
\DeclareMathOperator{\Ext}{Ext}
\DeclareMathOperator{\Hom}{Hom}
\DeclareMathOperator{\Tr}{\textnormal{Tr}}
\DeclareMathOperator{\G}{G}
\DeclareMathOperator{\Ass}{Ass}
\DeclareMathOperator{\Spec}{Spec}
\DeclareMathOperator{\pd}{pd}
\DeclareMathOperator{\depth}{depth}
\DeclareMathOperator{\Ker}{Ker}
\newcommand{\bb}{\left[ \begin{smallmatrix}}
\newcommand{\eb}{\end{smallmatrix} \right]}
\def\urltilda{\kern -.15em\lower .7ex\hbox{\~{}}\kern
  .04em}\def\urldot{\kern -.10em.\kern -.10em}\def\urlhttp{http\kern
  -.10em\lower -.1ex\hbox{:}\kern -.12em\lower 0ex\hbox{/}\kern
  -.18em\lower 0ex\hbox{/}} 
\begin{document}

\title[On the vanishing of self extensions]{On the vanishing of self extensions of even-periodic modules}

\author[E. Celikbas]{Ela Celikbas}
\address{Ela Celikbas\\
School of Mathematical and Data Sciences\\
West Virginia University\\
Morgantown, WV 26506-6310, U.S.A}
\email{ela.celikbas@math.wvu.edu}

\author[O. Celikbas]{Olgur Celikbas}
\address{Olgur Celikbas\\
School of Mathematical and Data Sciences\\
West Virginia University\\
Morgantown, WV 26506-6310, U.S.A}
\email{olgur.celikbas@math.wvu.edu}

\author[Hiroki Matsui]{Hiroki Matsui}
\address{Hiroki Matsui\\ Department of Mathematical Sciences,
Faculty of Science and Technology,
Tokushima University,
2-1 Minamijosanjima-cho, Tokushima 770-8506, JAPAN}
\email{hmatsui@tokushima-u.ac.jp}

\author[R.\ Takahashi]{Ryo Takahashi}
\address{Ryo Takahashi\\
Graduate School of Mathematics, Nagoya University, Furocho, Chikusaku, Nagoya, Aichi 464-8602, Japan}
\email{takahashi@math.nagoya-u.ac.jp}
\urladdr{https://www.math.nagoya-u.ac.jp/~takahashi/}

\subjclass[2020]{Primary 13D07; Secondary 13H10, 13D05, 13C12}
\keywords{Grothendieck group, periodic modules, rigid modules, vanishing of Ext} 
\thanks{Matsui was partly supported by JSPS Grant-in-Aid for Early-Career Scientists 22K13894}
\thanks{Takahashi was partly supported by JSPS Grant-in-Aid for Scientific Research 23K03070}

\maketitle

\setcounter{tocdepth}{1}

\begin{abstract} In this paper we study rigid modules over commutative Noetherian local rings, establish new freeness criteria for certain periodic rigid modules, and extend several results from the literature. Along the way, we prove general Ext vanishing results over Cohen-Macaulay rings and investigate modules which have zero class in the reduced Grothendieck group with rational coefficients. 
\end{abstract}

\section{Introduction}

Throughout $R$ denotes a commutative Noetherian local ring with maximal ideal $\fm$ and residue field $k$, and all $R$-modules are assumed to be finitely generated.

An $R$-module $M$ is called \emph{rigid} if $\Ext^1_R(M,M)=0$. The classification of rigid modules has been of interest in both commutative algebra and representation theory; see, for example \cite{HSW, OY}. Work by Auslander \cite{Au}, Jothilingham \cite{Jot}, and Lichtenbaum \cite{Li} shows that, over regular rings, the only rigid modules are free modules; see also \ref{Tor2}(ii). The same result holds, according to Dao \cite{Da1}, over certain hypersurface rings, such as hypersurfaces that are even dimensional simple singularities \cite[3.16]{Da1} or one-dimensional domains \cite[3.3]{Da1}. In fact, the following holds for one-dimensional hypersurfaces; see the beginning of Section 3.

\begin{prop} \label{ilginc} Let $R$ be a one-dimensional complete reduced hypersurface ring. Then $R$ is a domain if and only if each rigid $R$-module is free.
\end{prop}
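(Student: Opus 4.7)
The plan is to treat the two implications separately. For the ``only if'' direction, if $R$ is additionally a domain, then \cite[3.3]{Da1} already gives that every rigid $R$-module is free, exactly as recorded in the discussion preceding the proposition. So the content of the proposition is really in the reverse implication.

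For the converse I would argue by contrapositive and construct an explicit rigid non-free module whenever $R$ fails to be a domain. Since $R$ is a complete hypersurface, I can write $R = S/(f)$ with $(S,\fn)$ a two-dimensional complete regular local ring and $f \in \fn$ a nonzerodivisor. The ring $S$ is a UFD; reducedness of $R$ forces $f$ to be squarefree in $S$, and the hypothesis that $R$ is not a domain forces $f$ to have at least two non-associate prime factors. Thus one can factor $f = gh$ in $S$ with $g,h \in \fn$ both nonunits and sharing no common prime factor.

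Set $M = R/gR$. Since $h \in \fn$ is a nonunit, $g \notin (f)S$, so $g$ is a nonzero element of $R$ that annihilates $M$; in particular, $M$ cannot be free. For rigidity, coprimality of $g$ and $h$ in the UFD $S$ yields $\Ann_R(g) = hR$ and $\Ann_R(h) = gR$, which produces the two-periodic $R$-free resolution
\[
\cdots \xrightarrow{h} R \xrightarrow{g} R \xrightarrow{h} R \xrightarrow{g} R \to M \to 0.
\]
Applying $\Hom_R(-,M)$ and using that multiplication by $g$ on $M = R/gR$ is zero, the long Ext sequence collapses to
\[
\Ext^1_R(M,M) \iso \{m \in M : hm = 0\}.
\]
Now $M \iso S/gS$ is a one-dimensional reduced quotient of $S$, whose minimal primes are exactly the $(p)$ for $p$ a prime factor of $g$; since $h$ shares no prime factor with $g$, it avoids all of these, so multiplication by $h$ on $M$ is injective. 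Hence $\Ext^1_R(M,M) = 0$, and $M$ is the desired rigid non-free module.

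The only mildly delicate input is the structural setup: one has to invoke that a complete one-dimensional reduced hypersurface is of the form $S/(f)$ for $(S,\fn)$ a two-dimensional complete regular local ring with $f$ squarefree, so that unique factorization in $S$ delivers the coprime decomposition $f = gh$ precisely when $R$ is not a domain. Once that is in hand, the homological computation is a short direct consequence of the two-periodic resolution above.
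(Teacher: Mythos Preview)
Your proof is correct and follows essentially the same approach as the paper's own proof: both write $R=S/(f)$ with $S$ regular of dimension two, factor $f$ nontrivially as a product of coprime elements, and exhibit $R/(g)$ as a rigid non-free module via its two-periodic resolution. The only cosmetic difference is that the paper takes $g$ to be a single prime factor of $f$ (so that $M\cong S/(g)$ is a domain and injectivity of multiplication by $h$ is immediate), whereas you allow $g$ to be any squarefree proper factor and argue via reducedness of $S/gS$; this is a harmless variation.
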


Unlike the regular case, rigidity property of modules over several classes of rings remain mysterious. In this paper we are concerned with the following question, which is due to Dao \cite[9.1.4]{Lsurvey} for the Artinian case, and due to Huneke-Wiegand \cite[page 473]{HW1} for the one-dimensional case:

\begin{ques} \label{soru} Let $R$ be a Gorenstein local ring and let $M$ be an $R$-module. Assume either $R$ is Artinian, or $R$ is a one-dimensional domain and $M$ is torsion-free. If $M$ is rigid, then must $M$ be free? 
\end{ques}

Question \ref{soru} is intriguing to us as it is closely related to the celebrated conjecture of Auslander-Reiten \cite{AuRe} which asks whether maximal Cohen-Macaulay modules $M$ are free over Gorenstein rings $R$ provided that $\Ext^i_R(M,M)=0$ for all $i>0$. This conjecture, as well as its special cases such as Question \ref{soru}, are quite subtle to tackle, but they were intensively studied by several researchers via distinct arguments \cite{Ar, HJ, KK} since the conjecture intends to understand the general structure of modules, in fact originally over finite dimensional algebras. The one-dimensional case of Question \ref{soru}, which is also known as the Huneke-Wiegand conjecture \cite{HW1}, assumes $M\otimes_RM^{\ast}$ is torsion-free where $M^{\ast}=\Hom_R(M,R)$ \cite[5.9]{HJ}. There are one-dimensional Gorenstein domains $R$ and non-free torsion-free $R$-modules $M$ and $N$ (and also those that have both torsion) such that $M\otimes_R N$ is torsion-free \cite[2.1]{Constapel}. However, it is not known whether or not similar examples exist for tensor products of the form $M\otimes_RM^{\ast}$, even over one-dimensional complete intersection domains of codimension two.

The aim of this paper is to make progress on Question \ref{soru}, especially for periodic modules. Given an $R$-module $M$ and integer $n\geq 0$, we say $M$ is $n$-\emph{periodic} if $M \overset{st}{\cong} \Omega^n_R M$ where $\Omega^n_R M$ denotes the (minimal) $n$th syzygy module of $M$ and $\overset{st}{\cong}$  denotes a stable isomorphism, that is, $M \overset{st}{\cong} \Omega^n_R M$ if and only if $M \oplus F \cong \Omega^n_R M \oplus G$ for some free $R$-modules $F$ and $G$; see also \ref{stiso}.
 
Recall that $M$ has \emph{rank} (respectively, $M$ is \emph{generically free}) provided that there is an integer $r\geq 0$ such that $M_{\fp} \cong R_{\fp}^{\oplus r}$ (respectively, $M_{\fp}$ is a free $R_{\fp}$-module) for all $\fp \in \Ass(R)$. Our work is motivated by the following result which was proved independently by Le \cite[2.2.22 and 2.2.24]{Enni} and Toshinori Kobayashi (private communication):

\begin{thm} [{Le, Kobayashi}] \label{introFukano} If $R$ is a one-dimensional Gorenstein ring and $M$ is a $4$-periodic rigid $R$-module which has rank, then $M$ is free.
\end{thm}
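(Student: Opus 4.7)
My plan is to work in the stable category $\underline{\CM}(R)$ of maximal Cohen--Macaulay $R$-modules and exploit its Auslander--Reiten duality. Since $M\overset{st}{\cong}\Omega^4_R M$ and fourth syzygies over a one-dimensional Cohen--Macaulay ring are MCM, I may replace $M$ by a stably isomorphic MCM module and view it as an object of $\underline{\CM}(R)$. Under this reduction, $4$-periodicity makes the Tate Ext groups $\widehat{\Ext}^i_R(M,M)$ periodic of period $4$ in $i$; since $R$ is Gorenstein one has $\widehat{\Ext}^i_R(M,M)=\Ext^i_R(M,M)$ for $i\ge 1$, so the rigidity hypothesis yields $\widehat{\Ext}^{1+4k}_R(M,M)=0$ for every $k\in\ZZ$.

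Because $R$ is a one-dimensional Gorenstein local ring (hence an isolated singularity), $\underline{\CM}(R)$ is $0$-Calabi--Yau; equivalently, Auslander--Reiten duality supplies natural isomorphisms $\widehat{\Ext}^i_R(M,M)^{\vee}\cong \widehat{\Ext}^{-i}_R(M,M)$, where $(-)^{\vee}$ is Matlis duality and the finite-length hypothesis needed for its use is guaranteed by the rank assumption. Taking $i=1$ forces $\widehat{\Ext}^{-1}_R(M,M)=0$, and by $4$-periodicity $\widehat{\Ext}^{3}_R(M,M)=0$. Consequently $\Ext^i_R(M,M)=0$ for every odd $i\ge 1$.

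To eliminate Ext in even degrees, I would pass to the auxiliary module $N:=M\oplus\Omega^2_R M$. It is $2$-periodic, since $\Omega^2_R N\overset{st}{\cong}\Omega^2_R M\oplus\Omega^4_R M\overset{st}{\cong}N$, and the four summands of $\Ext^1_R(N,N)=\bigoplus_{i,j\in\{0,2\}}\widehat{\Ext}^{1+i-j}_R(M,M)$ all lie in the already-established vanishing range, so $N$ is also rigid and of rank $2\rk(M)$. The problem thus reduces to showing freeness of a $2$-periodic rigid MCM module with rank over a one-dimensional Gorenstein local ring. In $\underline{\CM}(R)$ the triviality of the Auslander--Reiten translate ($\tau=\mathrm{id}$, so $\Omega^2_R N^{*}\overset{st}{\cong}N$) further gives $N^{*}\overset{st}{\cong}\Omega^{-2}_R N\overset{st}{\cong}N$, so $N$ is stably self-dual. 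Rigidity plus rank then make $N\otimes_R N^{*}$ torsion-free of rank $\rk(N)^2$, and an analysis of $[N]$ in the reduced Grothendieck group with rational coefficients (the tool heralded in the abstract) should force $N$, and hence $M$, to be free.

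The main obstacle is this last step. The stable-categorical arguments above would apply to any $4$-periodic rigid object in a $0$-Calabi--Yau triangulated category and so cannot force freeness by themselves; the rank hypothesis must be deployed in an essential way, presumably via the class of $N$ in $\overline{G_0}(R)_{\QQ}$, to cross from odd-degree Ext vanishing to the full vanishing $\widehat{\Ext}^0_R(M,M)=\underline{\End}_R(M)=0$ that certifies that $\id_M$ factors through a free module, i.e.\ that $M$ is free.
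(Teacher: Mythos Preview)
Your overall plan coincides with the paper's: both arguments pass from the $4$-periodic module $M$ to the $2$-periodic module $N=M\oplus\Omega^2_RM$, and both must extract $\Ext^3_R(M,M)=0$ (equivalently $\widehat{\Ext}^{-1}_R(M,M)=0$) from rigidity in order to see that $N$ is rigid. Your route to this via the $0$-Calabi--Yau duality on $\underline{\CM}(R)$ is essentially the same fact the paper isolates as Lemma~\ref{conf} (specialised to $d=1$, $\omega_R=R$, $N=M$), namely $\Ext^1_R(M,M)=0\Leftrightarrow\Ext^1_R(M,\Omega^2_RM)=0$; there it is obtained concretely by applying the equivalence ``$\Ext^i=0$ for $1\le i\le d$ $\Leftrightarrow$ a tensor product is MCM'' (\ref{Tor2}(vi)) twice, with the symmetry~\ref{KYT} in between. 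Two cautions. First, a one-dimensional Gorenstein local ring is \emph{not} automatically an isolated singularity (take $k[\![x,y]\!]/(x^2)$), so your parenthetical ``hence an isolated singularity'' is false; the duality you want still holds for this particular $M$ because the rank hypothesis makes $M$ locally free on the punctured spectrum, and that is what you should invoke. Second, in a $0$-CY triangulated category it is the Serre functor that is the identity, not the AR translate; here $\tau\cong\Sigma^{-1}\cong\Omega_R$, so the line ``$\tau=\mathrm{id}$, so $\Omega^2_RN^{\ast}\overset{st}{\cong}N$'' is incorrect and, fortunately, unnecessary.

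The genuine gap is exactly the $2$-periodic endgame you flag. The paper closes it not by further stable-categorical manipulation but by a Tor-rigidity input, recorded as~\ref{teta}: if $X$ is $2$-periodic and locally of finite projective dimension on the punctured spectrum, then the pair $(X,Y)$ is Tor-rigid for every $Y$ with $[Y]=0$ in $\overline{\G}(R)_{\QQ}$. One applies this with $X=N$ and $Y=\Tr_R\Omega_RN$; the class of $Y$ vanishes because $[N^{\ast}]=0$ (a consequence of $M$ having rank, cf.\ \ref{gglist}(ii),(iv)) and because $\Ext^1_R(N,R)$ has finite length. Then rigidity of $N$ yields freeness via \ref{Tor2}(ii); this is Proposition~\ref{2-periodic-prop}. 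Your observation that $N\otimes_RN^{\ast}$ is torsion-free is correct, and your instinct that the Grothendieck-group class of $N$ is the missing lever is exactly right, but torsion-freeness alone does not replace the Tor-rigidity step---the paper's alternative route through torsion-freeness (Corollary~\ref{maincor}(2)) also ultimately rests on~\ref{teta} via Lemma~\ref{mainlemma}.
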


Theorem \ref{introFukano}, for the case where $M$ is 2-periodic, was proved by Celikbas-Le-Matsui \cite[1.3]{Fukano}. If $R$ has dimension one and $M$ is an $R$-module which has rank, then $M$ is generically free and $[M]=0$ in $\overline{\G}(R)_{\QQ}$, where $\overline{\G}(R)_{\QQ}$ denotes the reduced Grothendieck group of $R$ with rational coefficients; see \ref{gg} and \ref{gglist}(ii) for the details. Motivated by this observation, we generalize Theorem \ref{introFukano} and replace the hypothesis ``$M$ has rank" with the weaker hypothesis ``$M$ is generically free and has zero class in the the reduced Grothendieck group." We obtain this generalization as a corollary of the following theorem.

\begin{thm}\label{yenithm} Let $R$ be a $d$-dimensional Gorenstein ring, $M$ be a generically free $R$-module, and let $n\geq 1$. Assume the following conditions hold:
\begin{enumerate}[\rm(i)]
\item $[M]=0$ in $\overline{\G}(R)_{\QQ}$.
\item $M$ is $2n$-periodic.
\item $\Ext^i_R(M,M)=0$ for all $i=1,3, \ldots, 2n-3$.
\end{enumerate}
Then $M$ is free if at least one the following holds:
\begin{enumerate}[\rm(1)]
\item $\Ext^{2n-1}_R(M,M)=0$.
\item $M\otimes_RM^{\ast}$ is torsion-free.
\item $\Ext^i_R(M,M)=0$ for all $i=1, 2,\ldots, \max\{1,d\}$.
\end{enumerate}
\end{thm}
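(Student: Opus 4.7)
The overall plan is to reduce cases (2) and (3) to case (1), and then prove case (1) by combining periodicity with a Herbrand-type invariant that must vanish because of the hypothesis $[M]=0$ in $\overline{\G}(R)_{\QQ}$. Since $M$ is $2n$-periodic over the Gorenstein ring $R$, $M$ is (stably) maximal Cohen-Macaulay, hence reflexive, and the stable isomorphism $M\overset{st}{\cong}\Omega^{2n}_R M$ gives $\Ext^i_R(M,M)\cong\Ext^{i+2n}_R(M,M)$ for every $i\ge 1$, so the Ext pattern is determined by one fundamental period.

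For case (1): the assumption $\Ext^{2n-1}_R(M,M)=0$ together with (iii) yields $\Ext^i_R(M,M)=0$ for every odd $i$ within one period, hence for every odd $i\ge 1$ by periodicity. I would then invoke (as the main intermediate lemma, to be supplied by an earlier section of the paper) a Herbrand-vanishing result: for a $2n$-periodic generically free module $M$ over a Gorenstein ring $R$ with $[M]=0$ in $\overline{\G}(R)_{\QQ}$, the alternating invariant
$$\chi(M):=\sum_{i=1}^{2n}(-1)^i\,\ell_R\!\bigl(\Ext^i_R(M,M)\bigr)$$
(interpreted locally over the punctured spectrum where the lengths are finite) vanishes. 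Total odd-Ext vanishing then reduces $\chi(M)$ to a sum of nonnegative integers, forcing every even $\Ext^{2i}_R(M,M)$ to vanish. With total self-Ext vanishing in hand, freeness of $M$ follows by a standard criterion for rigid periodic MCM modules over Gorenstein rings, e.g.\ by reducing modulo a maximal regular sequence (which preserves periodicity for MCM modules) to the Artinian case, where a periodic module with trivial self-Ext must be zero stably, and then lifting back.

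For case (2): reflexivity of $M$ combined with torsion-freeness of $M\otimes_R M^{\ast}$ forces, via the Huneke-Wiegand tensor-product analysis (see \cite[5.9]{HJ}), vanishing of $\Tor_1^R(M, M^{\ast})$. Translating through Auslander-Bridger duality (so that $\Tor_i^R(M,M^{\ast})$ is identified with Ext-groups of suitable transposes and syzygies of $M$) and using $2n$-periodicity to shift indices, one extracts $\Ext^{2n-1}_R(M,M)=0$, thus reducing to case (1). For case (3): if $d=0$, the hypothesis directly yields $\Ext^1_R(M,M)=0$, and the stable isomorphism $M\overset{st}{\cong}\Omega^{2n}_R M$ gives $\Ext^{2n-1}_R(M,M)\cong\Ext^1_R(M,\Omega^{2n-2}_R M)$; using (iii) to shift syzygies through the periodic resolution collapses this to $\Ext^1_R(M,M)=0$. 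For $d\ge 1$, the vanishing of $\Ext^i_R(M,M)$ for $1\le i\le d$, combined with the MCM property of $M$ and iterated syzygy manipulation along the periodic resolution, again delivers $\Ext^{2n-1}_R(M,M)=0$, reducing once more to case (1).

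The main obstacle is the Herbrand vanishing step in case (1), which genuinely requires linking the class $[M]\in\overline{\G}(R)_{\QQ}$ to the alternating-length invariant $\chi(M)$ in the full Gorenstein generality (not merely the hypersurface case, where such invariants are classical via Hochster's $\theta$-pairing). This is precisely where rational coefficients in the reduced Grothendieck group and the generic-freeness hypothesis both play essential roles: rational coefficients allow one to trivialize the contribution from free summands, while generic freeness ensures that the Ext lengths appearing in $\chi(M)$ are supported on the punctured spectrum, so the alternating sum is well-defined and captured by Grothendieck-theoretic data. The Artinian reduction at the end of case (1) also needs care, since reducing modulo a regular sequence must preserve both periodicity and the vanishing of the reduced Grothendieck class.
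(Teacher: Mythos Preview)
Your Herbrand step in case (1) is essentially sound once $M$ is reduced to a vector bundle: the map $N\mapsto\sum_{i=1}^{2n}(-1)^i\ell_R(\Ext^i_R(M,N))$ is additive on short exact sequences by periodicity and kills $[R]$ since $M$ is maximal Cohen--Macaulay over a Gorenstein ring, so it factors through $\overline{G}(R)_\QQ$ and vanishes at $N=M$. The genuine gap is your final step. The claim that ``a periodic module with trivial self-Ext must be zero stably'' over an Artinian Gorenstein ring is \emph{not} a standard criterion: for arbitrary period this is essentially Question~\ref{soru}, which is open. What actually works---and what you are missing---is to form the direct sum $X=M\oplus\Omega^2_RM\oplus\cdots\oplus\Omega^{2n-2}_RM$. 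Once all odd $\Ext^i_R(M,M)$ vanish, $X$ is $2$-periodic, rigid, generically free, with $[X^*]=0$, and for $2$-periodic modules the Tor-rigidity result \ref{teta} is available to close the argument (Proposition~\ref{2-periodic-prop}). The paper uses exactly this $2$-periodic direct-sum device in both halves of the proof (applied to $M$ in Theorem~\ref{genel} for part (3), and to $\Tr_RM$ in Lemma~\ref{mainlemma} for parts (1) and (2)); it never establishes total even-Ext vanishing, but passes directly from odd-Ext vanishing through the direct sum to Tor-rigidity and $\Tor_1^R(M,\Tr_RM)=0$.

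Your reductions of (2) and (3) to (1) have the right target but the wrong mechanisms. For (2), torsion-freeness of $M\otimes_RM^*$ does not yield $\Tor_1^R(M,M^*)=0$ by \cite[5.9]{HJ}; that reference is an Ext/MCM-tensor equivalence, not a Tor statement. The actual link (Lemma~\ref{lemsupp}) uses the four-term sequence in \ref{Tor2}(iii): $\depth_R(M\otimes_RM^*)\ge1$ forces $\Tor_2^R(\Tr_RM,M)=0$, and this is isomorphic to $\Ext^{2n-1}_R(M,M)$ via reflexivity and $2n$-periodicity. For (3), ``iterated syzygy manipulation along the periodic resolution'' cannot by itself produce $\Ext^{2n-1}_R(M,M)=0$ from vanishing in degrees $\le d$ when $2n-1>d$; the paper's mechanism is a nontrivial symmetry (Lemma~\ref{conf}, resting on the Cohen--Macaulay tensor-product criterion \ref{Tor2}(vi) together with \ref{KYT}) showing that $\Ext^i_R(M,M)=0$ for $i=1,\ldots,d$ is equivalent to $\Ext^i_R(M,\Omega^2_RM)=0$ for the same range, after which periodicity shifts the vanishing to $i=2n-1,\ldots,2n+d-2$.
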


We prove Theorem \ref{yenithm} as a corollary of two general theorems, namely Theorems \ref{genel} and \ref{mainthmnew}; see also Corollary \ref{maincor}. Note that Theorem \ref{yenithm}(3), when $n=2$, establishes Theorem \ref{introFukano}; see also Corollary \ref{corHiroki}. Although we consider Gorenstein rings in Theorem \ref{yenithm}, our work is not restricted to such rings. For example, in Theorem \ref{genel}, we consider $4$-periodic totally reflexive modules over Cohen-Macaulay rings which are not necessarily Gorenstein.
It is also worth noting here that there are examples of $2n$-periodic modules for $n\geq 2$; see, for example, \cite[3.3]{GP}.  

A consequence of our results pertaining to Question \ref{soru} for the Artinian case can be stated as follows; see Propositions \ref{thmintroproof1} and \ref{goodprop}, and also Corollary \ref{H4}.

\begin{prop} \label{thmintro} Let $M$ be a rigid $R$-module. Then $M$ is free if at least one of the following holds:
\begin{enumerate}[\rm(i)]
\item $R$ is Artinian and $M$ is $2$-periodic.
\item $R$ has depth zero and $M \overset{st}{\cong} \Omega^n_RI$ for some totally reflexive ideal $I$ of $R$ and integer $n\geq 0$.
\end{enumerate}
\end{prop}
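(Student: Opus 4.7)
The plan is to establish the two parts separately, isolating the relevant Ext-vanishing pattern and then invoking the structural results referenced in the statement.

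\textbf{Part (i).} Since $M$ is $2$-periodic, the isomorphism $\Ext^{i+2}_R(M,M)\cong\Ext^i_R(M,M)$ (valid for every $i\geq 1$) promotes the single vanishing $\Ext^1_R(M,M)=0$ to $\Ext^j_R(M,M)=0$ for every odd $j\geq 1$. Combined with the fact that the stable isomorphism $M\overset{st}{\cong}\Omega^2_R M$ realizes $M$ as a syzygy of every order, this matches exactly the input of Proposition \ref{thmintroproof1}, which is invoked to conclude that $M$ is free.

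\textbf{Part (ii).} The first move is to reduce rigidity of $M$ to rigidity of $I$. Since $I$ is totally reflexive, so are $\Omega^n_R I$ and (hence) $M$; and for totally reflexive modules Tate cohomology agrees with $\Ext$ in positive degrees and is invariant under syzygy shifts in either variable, giving $\Ext^1_R(M,M)\cong\Ext^1_R(I,I)$. It therefore suffices to show that any rigid totally reflexive ideal $I$ in a depth-zero local ring must satisfy $I\in\{0,R\}$; in either case $M\overset{st}{\cong}\Omega^n_R I$ is stably zero, hence free.

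Now apply $\Hom_R(I,-)$ to $0\to I\to R\to R/I\to 0$. Using $\Ext^1_R(I,R)=0$ (total reflexivity of $I$) and the identification $\Hom_R(I,I)\cong\Hom_R(I,R)\cong R/\Ann_R(I)$ (valid because $I$ is an ideal), the resulting long exact sequence collapses to $\Hom_R(I,R/I)\cong\Ext^1_R(I,I)$, so rigidity forces $\Hom_R(I,R/I)=0$. The main step is to derive a contradiction from this whenever $I$ is a proper nonzero ideal. The Auslander-Bridger formula, together with $\depth R = 0$, propagates total reflexivity from $I$ through $0\to I\to R\to R/I\to 0$ to $R/I$, so $\depth(R/I) = 0$ and $\Soc(R/I)\neq 0$. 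A nonzero element of $\Soc(R/I)$ gives a nonzero $R$-map $k\to R/I$; composing with a surjection $I\twoheadrightarrow I/\fm I\twoheadrightarrow k$ yields a nonzero element of $\Hom_R(I,R/I)$, contradicting the vanishing. Hence $I\in\{0,R\}$, as required; Proposition \ref{goodprop} and Corollary \ref{H4} assemble these reductions into the stated result.
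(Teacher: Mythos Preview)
Your Part (i) is correct and essentially matches the paper: you invoke Proposition~\ref{thmintroproof1} directly, which is exactly what the paper does. The preamble about propagating the vanishing of $\Ext^1$ to all odd degrees is unnecessary, since Proposition~\ref{thmintroproof1} only requires $\Ext^j_R(M,M)=0$ for a single $j$.

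Your Part (ii) follows the same overall route as the paper (reduce rigidity of $M$ to rigidity of $I$ via \ref{extiso}, show $\Hom_R(I,R/I)=0$, then contradict $\depth_R(R/I)=0$), but there is a genuine gap in the justification of the key step. The claimed identification $\Hom_R(I,I)\cong\Hom_R(I,R)\cong R/\Ann_R(I)$ is \emph{not} ``valid because $I$ is an ideal''. What you need is that the map $\Hom_R(I,I)\to\Hom_R(I,R)$ induced by $I\hookrightarrow R$ is surjective, i.e., that every $f\colon I\to R$ has image in $I$; equivalently, that the natural map $R\to I^*$ is surjective, i.e., $\Ext^1_R(R/I,R)=0$. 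This fails for ideals in general: for $R=k[\![t^3,t^4,t^5]\!]$ and $I=(t^3,t^4)$, multiplication by $t$ gives $f\in\Hom_R(I,R)$ with $f(t^4)=t^5\notin I$.

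The fix is precisely the argument you invoke one sentence later for a different purpose. Since $I$ is totally reflexive, the sequence $0\to I\to R\to R/I\to 0$ gives $\Gdim_R(R/I)\le 1<\infty$; then the Auslander--Bridger formula with $\depth R=0$ forces $\Gdim_R(R/I)=\depth_R(R/I)=0$. Thus $R/I$ is totally reflexive, so $\Ext^1_R(R/I,R)=0$, whence $R\twoheadrightarrow I^*$ and every $f\colon I\to R$ is multiplication by some $r\in R$, giving $f(I)=rI\subseteq I$. This is exactly how the paper proceeds in Lemma~\ref{goodprop} and Corollary~\ref{H4}. Once you move this Auslander--Bridger step earlier, your argument is complete and coincides with the paper's.
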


We also give several applications of our results and investigate $2$-periodic rigid modules over rings of arbitrary dimension; see, Proposition \ref{2-periodic-prop}, Theorem \ref{mainthmnew}, and Corollary \ref{corGor}.

\section{Preliminaries} 

This section is devoted to various preliminary definitions and results necessary for the proofs presented in the subsequent sections. 

\begin{chunk} [{\textbf{Transpose}}] \label{tr} Let $M$ be an $R$-module and let $P_1\overset{f}{\rightarrow}P_0\rightarrow M\rightarrow 0$ be a minimal projective presentation (that is, part of a minimal free resolution of $M$). The \emph{transpose}  $\Tr_R M$ of $M$ is the cokernel of $f^{\ast}=\Hom_{R}(f,R)$, and hence is given by the following exact sequence; see \cite{AuBr}.
$$0\rightarrow M^*\rightarrow P_0^*\overset{f^*}{\rightarrow} P_1^*\rightarrow \Tr_R M\rightarrow 0.$$
Note that $\Tr_R M$ is uniquely determined (since so is a minimal free resolution of $M$). Moreover, we have:
\begin{equation}\tag{\ref{tr}.1}
M^{\ast}\overset{st}{\cong} \Omega_R^2 \Tr_R M \;\; \text{ and } \;\; \Tr_R \Tr_R M \overset{st}{\cong} M \;\;  \text{ so that } \;\; (\Tr_R M)^{\ast}  \overset{st}{\cong} \Omega^2_R M.
\end{equation}
\end{chunk}

\begin{chunk} [{\textbf{Cosyzygy}}] \label{c3} Let $M$ be an $R$-module and let $\{f_{1},f_{2},\dots, f_{s}\}$ be a minimal generating set of $M^{\ast}$. 
Let $\displaystyle{\delta: R^{\oplus s} \twoheadrightarrow M^{\ast}}$ be the morphism defined by $\delta(e_{i})=f_{i}$ for $i=1,2,\dots, s$, where $\{e_{1},e_{2},\dots, e_{s}\}$ is the standard basis for $R^{\oplus s}$. 
Then, composing the natural map $M \rightarrow M^{\ast\ast}$ with $\delta^{\ast}$, we obtain the exact sequence:
\begin{equation}\notag{}
M \stackrel{u}{\rightarrow} R^{\oplus s} \rightarrow M_{1} \rightarrow 0,
\end{equation}
where $u(x)=(f_{1}(x),f_{2}(x),\dots, f_{s}(x))$ for all $x\in N$. 
Then the morphism $u$ is a pushforward (or a left projective approximation) in the sense that the dual $u^*: R^{\oplus s} \to M^*$ is surjective.
Note that such a construction is unique, up to a non-canonical isomorphism; see, for example, \cite[page 62]{EG}. We call $M_1$ the {\it cosyzygy of $M$} and denote it by $\Omega^{-1}_R M$.

The following properties hold:
\begin{enumerate}[\rm(i)]
\item If $M$ is torsionless, that is, if the natural map $M \rightarrow M^{\ast\ast}$ is injective, then the map $u$ is also injective. So, if $M$ is torsionless, we have a short exact sequence of the form:
\begin{equation}\notag{}
0 \rightarrow M \stackrel{u}{\rightarrow} R^{\oplus s} \rightarrow \Omega^{-1}_R M \rightarrow 0.
\end{equation}
\item It follows by construction that $\Ext^1_R(\Omega^{-1}_R M,R)=0$. Thus $\Tr_R \Omega^{-1}_R M \overset{st}{\cong}  \Omega_R \Tr_R M$; see \cite[3.9]{AuBr}. 
Hence, by setting $\Omega^{-i}_R M =\Omega_R^{-1}\Big(\Omega_R^{-(i-1)}M\Big)$ for $i\geq 2$, we have that $\Omega^{-i}_R M \overset{st}{\cong}  \Tr \Omega^i_R \Tr_R M $ for all $i\geq 1$.
\item It follows that $ \Omega^{-i}_R \Tr_RM  \overset{st}{\cong} \Tr_R \Omega_R^i \Tr_R \Tr_R M \overset{st}{\cong} \Tr_R \Omega_R^i M$ for all $i\geq 0$.
\item If $M$ is totally reflexive, then $\Omega_R^m\Omega_R^nM \overset{st}{\cong} \Omega_R^{m+n}M$ for each integer $m$ and $n$.
\end{enumerate}
\end{chunk}


\begin{chunk} [{\textbf{Grothendieck Group}}]  \label{gg} The \emph{Grothendieck group} of the category of all $R$-modules is defined as
$$G(R)=\frac{\bigoplus_{[X] \in \Gamma}\ZZ\cdot [X]}{\big<[X]-[X']-[X''] \;|\; 0 \to X' \to X \to X'' \to 0 \textnormal{ is an exact sequence of $R$-modules} \big>}$$
where $\Gamma$ is a set of isomorphism classes of all $R$-modules.

We denote the \emph{reduced Grothendieck group} by $\displaystyle{\overline{G}(R)=G(R)/\ZZ \cdot [R]}$ and the \emph{reduced Grothendieck group of $R$ with rational coefficients} by $\displaystyle{\overline{G}(R)_{\QQ}=(G(R)/\ZZ \cdot [R])\otimes_{\ZZ}\QQ}$; see, for example, \cite{Da1}. Given an $R$-module $M$, we also denote the class of $M$ in $\overline{G}(R)_{\QQ}$ by $[M]$. 
\end{chunk}

\begin{chunk} \label{gglist} We need the following properties in the sequel:
\begin{enumerate}[\rm(i)]
\item If $N$ is an $R$-module, then $[N]=0$ in $\overline{\G}(R)_{\QQ}$ provided that $N$ is a syzygy of a finite length $R$-module. Therefore, if $R$ is Artinian, then $\overline{\G}(R)_{\QQ}=0$; see \cite[2.5]{CeD}.
\item Assume $R$ has dimension one and $M$ is an $R$-module which has rank. Then the torsion-free part $\overline{M}$ of $M$ has rank and so there is an exact sequence of $R$-modules $0 \to \overline{M} \to F \to C \to 0$, where $F$ is free and $C$ is a module of finite length. As $[F]$ and $[C]$ are both zero in $\overline{\G}(R)_{\QQ}$, so is $[\overline{M}]$; see part (i). It follows from the exact sequence $0\to T \to M \to \overline{M} \to 0$ that $[M]=0$ in $\overline{\G}(R)_{\QQ}$ because the torsion submodule $T$ of $M$ has finite length. This argument shows that, if $R$ is a one-dimensional domain, then $\overline{\G}(R)_{\QQ}=0$.
\item If $R$ is a two-dimensional normal domain with torsion class group, then $\overline{\G}(R)_{\QQ}=0$; see \cite[2.5]{CeD}.
\item If $R$ is Gorenstein and $M$ is an $R$-module such that $M_{\fp}$ is a maximal Cohen-Macaulay $R_{\fp}$-module for all $\fp \in \Spec(R)-\{\fm\}$, then $[M]=0$ in $\overline{\G}(R)_{\QQ}$ if and only if $[M^{\ast}]=0$ in $\overline{\G}(R)_{\QQ}$ if and only if $[\Tr_R M] = 0$ in $\overline{\G}(R)_{\QQ}$;  see \cite[2.17]{Fukano}.
\end{enumerate}
\end{chunk}

\begin{chunk} \label{extiso} A reflexive $R$-module $M$ is called \emph{totally reflexive} if $\Ext^i_R(M,R)=\Ext^i_R(M^{\ast},R)=0$ for all $i>0$; see \cite{AuBr}.  

Let $M$ and $N$ be totally reflexive $R$-modules. Given $n\geq 1$, it follows that: $$\Ext^n_R(M,N) \cong  \Ext^n_R(N^{\ast}, M^{\ast}) \cong \Ext^n_R(\Tr_R N, \Tr_R M).$$
Moreover, for all  $n\geq 1$ and all $j\in \ZZ$ such that $n>j$, we have:
$$\Ext^{n-j}_R(M,N) \cong \Ext^n_R(M, \Omega^j_R N).$$ 
\end{chunk}

\begin{chunk} An $R$-module $M$ is called a \emph{vector bundle} if $M_{\fp}$ is a free $R_{\fp}$-module for all $\fp \in \Spec(R)-\{\fm\}$.
\end{chunk}

\begin{chunk} \label{Tor2} Let $M$ be an $R$-module. 
\begin{enumerate}[\rm(i)]
\item If $\Tor_1^R(M, \Tr_R M)=0$, then $M$ is free; see \cite[3.9]{Yo}.
\item If $M$ is rigid (that is, $\Ext^1_R(M,M)=0$) and the pair $(M, \Tr_R \Omega_R M)$ is Tor-rigid, then $M$ is free. Similarly, if $\Ext^n_R(M,M)=0$ for some $n\geq 0$ and $M$ is Tor-rigid, then $\pd_R(M)<n$; see, for example, \cite[3.1.2]{Lsurvey}
\item In view of \cite[2.6]{AuBr} there is an exact sequence of $R$-modules $$0 \to \Tor_2^R(M, \Tr_R M) \to M\otimes_RM^{\ast} \to \Hom_R(M,M) \to \Tor_1^R(M, \Tr_R M) \to 0.$$
Assume $\depth_R(M\otimes_R M^{\ast})\geq 1$ and $\Tor_2^R(M, \Tr_R M)$ has finite length. Then $\Tor_2^R(M, \Tr_R M)=0$ due to the injection $\Tor_2^R(\Tr_R M, M) \hookrightarrow M\otimes_RM^{\ast}$.

Recall that $(\Tr_R M)^{\ast} \overset{st}{\cong} \Omega_R^2 M$; see (\ref{tr}.1). Hence, as before, if $\depth_R(\Tr_R M\otimes_R \Omega^2_R M)\geq 1$ and $\Tor_2^R(M, \Tr_R M)$ has finite length, it follows that $\Tor_2^R(M, \Tr_R M)=0$.
\item Assume $\depth_R(M)\geq 1$. If $\Ext^i_R(\Tr_R M, R)$ has finite length for some $i\geq 1$, for example, if $M$ is a vector bundle, then $\Hom_R(\Ext^i_R(\Tr_R M, R),M)=0$.
\item It follows from \cite{AuBr} that, for each $i\geq 0$,  there is an exact sequence of the form $$0 \to \Ext^1_R(\Omega^{-i}_R M, M) \to \Tor_i^R(\Tr_R M, M) \to \Hom_R(\Ext^i_R(\Tr_R M, R), M).$$
\item Let $R$ be a $d$-dimensional Cohen-Macaulay ring with canonical module $\omega_R$, and let $M$ and $N$ be $R$-modules. Assume $M$ is a vector bundle and $N$ is maximal Cohen-Macaulay. Then $\Ext^i_R(M,N)=0$ for all $i=1, \ldots, d$ if and only if $M\otimes_RN^{\dagger}$ is maximal Cohen-Macaulay; see \cite[2.3]{GTAR}, and also \cite[5.9]{HJ} for the Gorenstein case.
\end{enumerate}
\end{chunk}

Since we are primarily concerned with periodic modules, the following observation is worth noting.

\begin{chunk} \label{stiso} Let $M$ be an $R$-module such that $M$ does not have nonzero free summands. Given an integer $n\geq 1$, it follows that $M$ is $n$-periodic if and only if $M \cong \Omega^n_R M$.

To see this, assume $M$ is $n$-periodic. Then $M\oplus R^{\oplus a} \cong \Omega^n_R M$ for some $a\geq 0$; see \cite[1.10, 1.15]{TheBook}. By the depth lemma, we have that $\depth_R(\Omega^j_R M)\geq \inf\big\{\depth(R), \depth_R(M)+j\big\}$ for all $j\geq 0$. Hence, if $\depth_R(M)<\depth(R)$, then $\depth_R(M)=\inf\{\depth_R(M), \depth(R)\}=\depth_R(M\oplus R^{\oplus a})=\depth_R(\Omega^n_R M)$. This shows that $\depth_R(M)\geq \inf\big\{\depth(R), \depth_R(M)+n\big\}$, a contradiction with the assumptions $n\geq 1$ and $\depth_R(M)<\depth(R)$. Thus, we conclude that $\depth_R(M)\geq \depth(R)$.

As $M\oplus R^{\oplus a} \cong \Omega^n_R M$, there is a short exact sequence of $R$-modules $0 \to M\oplus R^{\oplus a} \to R^{\oplus b} \to \Omega^{n-1}_R M\to 0$ where $b=\mu_R(\Omega^{n-1}_R M)$, the minimal number of generators of $\Omega^{n-1}_R M$. This sequence yields two exact sequences $0\to  R^{\oplus a} \to  R^{\oplus b} \to B \to 0$ and $0\to M \to  B \to \Omega^{n-1}_R M \to 0$ for some $R$-module $B$; see, for example, \cite[3.1]{TakGS}. We see, by using the depth lemma and the latter sequence, that $\depth_R(B)\geq \depth_R(M)$ since $\depth_R(M)\geq \depth(R)$.
As the former sequence shows that $\pd_R(B)<\infty$, we deduce that $B$ is free and $R^{\oplus b} \cong R^{\oplus a} \oplus B$. Therefore, $\mu_R(\Omega^{n-1}_R M)=b=a+\mu_R(B)\geq a+\mu_R(\Omega^{n-1}_R M)$. So, $a=0$ and $M \cong \Omega^n_R M$.
\end{chunk}

\section{Proofs of Propositions  \ref{ilginc} and \ref{thmintro}} 

We start this section by giving a proof for Proposition \ref{ilginc} stated in the introduction:

\begin{proof}[Proof of Proposition \ref{ilginc}] Assume $R$ is a one-dimensional hypersurface domain. Then each $R$-module is Tor-rigid \cite[3.3]{Da1} so that \ref{Tor2}(ii) shows that rigid modules are free. Hence it suffices to prove the converse implication.

Let $R=S/(f)$ for some complete two-dimensional regular local ring $(S, \fn)$ and an element $0\neq f\in \fn$. We can write $f=p_1\cdots p_n$ for some $n\geq 1$, where each $p_i$ are prime elements in $S$. Furthermore, $(p_i)$ and  $(p_j)$ are distinct ideals of $S$ for distinct $i$ and $j$ since $R$ is reduced.

Suppose $R$ is not a domain. Then $n>1$. Set $p=p_1$ and $q=p_2\cdots p_n$ so that $R=S/(pq)$. Then the minimal free resolution of the $R$-module $M=R/(p)$ is $\cdots \to R \xrightarrow{p} R\xrightarrow{q} R \xrightarrow{p} R \to 0$. It follows that $M$ is rigid since $\Ext^1_R(M,M) \cong \Ker(M \xrightarrow{q} M)=0$. As $M$ is not free, this provides a contradiction of our assumption. Consequently, $n=1$ and $R$ is a domain.
\end{proof}

Next we proceed to prove Proposition \ref{thmintro}. In fact, the first part of the proposition is a quick application of the following result:

\begin{chunk} [{\cite[3.5]{Fukano}}] \label{teta} Let $M$ and $N$ be $R$-modules. Assume the following conditions hold:
\begin{enumerate}[\rm(i)]
\item $[N]=0$ in $\overline{\G}(R)_{\QQ}$.
\item $\pd_{R_{\fp}}(M_{\fp})<\infty$ for all $\fp \in \Spec(R)-\{\fm\}$.
\item $M$ is $2$-periodic for some $n\geq 1$.
\end{enumerate}
Then $(M,N)$ is Tor-rigid, that is, if $\Tor_n^R(M,N)=0$ for some $n\geq 0$, then $\Tor_i^R(M,N)=0$ for all $i\geq n$.
\end{chunk}

The following establishes Theorem \ref{thmintro}(i):

\begin{prop} \label{thmintroproof1} Let $R$ be an Artinian ring and let $M$ be a $2$-periodic $R$-module. If $\Ext^j_R(M,M)=0$ for some $j\geq 0$, then $M$ is free.
\end{prop}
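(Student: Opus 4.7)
The plan is to derive Proposition \ref{thmintroproof1} as a direct consequence of the Tor-rigidity criterion \ref{teta} together with the vanishing principle \ref{Tor2}(ii).

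The first step will be to check that the Artinian hypothesis trivializes the auxiliary conditions of \ref{teta}. Indeed, \ref{gglist}(i) gives $\overline{\G}(R)_{\QQ}=0$, so $[N]=0$ holds for every $R$-module $N$, and $\Spec(R)\setminus\{\fm\}=\emptyset$ makes the punctual finite projective dimension condition $\pd_{R_{\fp}}(M_{\fp})<\infty$ vacuous. Combined with the hypothesis that $M$ is $2$-periodic (so it is $2n$-periodic with $n=1$), this shows that all three conditions of \ref{teta} apply to the pair $(M,N)$ for every choice of $R$-module $N$. Consequently $M$ is Tor-rigid in the sense of \ref{Tor2}(ii).

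The second and final step will be to feed this Tor-rigidity, together with the hypothesis $\Ext^j_R(M,M)=0$, into the second half of \ref{Tor2}(ii) to conclude $\pd_R(M)<j$; in particular $\pd_R(M)$ is finite, so $M$ is free over the local ring $R$ (either by Auslander--Buchsbaum, or simply because $\depth R=0$ forces $\pd_R M = 0$). The corner case $j=0$ needs a one-line remark: $\Hom_R(M,M)=0$ forces $\id_M=0$, hence $M=0$, which is trivially free.

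There is essentially no substantive obstacle in this argument; the entire content is the observation that the Grothendieck-theoretic and punctual-projectivity hypotheses of \ref{teta} are automatically fulfilled over any Artinian local ring, so even-periodicity alone upgrades to Tor-rigidity. All of the heavy lifting is concealed in the two cited results \ref{teta} and \ref{Tor2}(ii).
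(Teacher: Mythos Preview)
Your proposal is correct and follows essentially the same route as the paper: use \ref{gglist}(i) to kill $\overline{\G}(R)_{\QQ}$, note that the punctured-spectrum condition in \ref{teta} is vacuous, conclude that $M$ is Tor-rigid, and then invoke \ref{Tor2}(ii). Your write-up is in fact slightly more careful than the paper's own proof in spelling out why finite projective dimension forces freeness over an Artinian local ring and in isolating the $j=0$ edge case.
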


\begin{proof} As $R$ is Artinian and $M$ is a $2$-periodic $R$-module, it follows that $\overline{\G}(R)_{\QQ}=0$ and so $M$ is Tor-rigid; see \ref{gglist}(i) and \ref{teta}. Therefore, due to \ref{Tor2}(ii), if $\Ext^j_R(M,M)=0$ for some $j\geq 0$, then $M$ is free.
\end{proof}

As a corollary of the next result, we prove the second part of Proposition \ref{thmintro}; see Corollary \ref{H4}.

\begin{lem} \label{goodprop} Let $I$ and $J$ be ideals of $R$ such that $I\subseteq J$. Assume $\Ext_R^1(R/I,R)=\Ext_R^1(I,J)=0$. Then:
\begin{enumerate}[\rm(i)]
\item $\Hom_R(I,R/J)=0$. 
\item If $\depth_R(R/J)=0$, then $I=0$.
\end{enumerate}
\end{lem}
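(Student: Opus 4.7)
For part (i), the plan is to compute $\Hom_R(I,R)$ and $\Hom_R(I,R/J)$ from the two given $\Ext$ vanishings. Applying $\Hom_R(-,R)$ to the short exact sequence $0\to I\to R\to R/I\to 0$ and using $\Ext^1_R(R/I,R)=0$ shows that the map $R=\Hom_R(R,R)\to\Hom_R(I,R)$ is surjective, so every $R$-linear map $\varphi\colon I\to R$ is multiplication by some $r\in R$. Separately, applying $\Hom_R(I,-)$ to $0\to J\to R\to R/J\to 0$ and using $\Ext^1_R(I,J)=0$ yields the short exact sequence
\[
0\to\Hom_R(I,J)\to\Hom_R(I,R)\to\Hom_R(I,R/J)\to 0.
\]
Thus it suffices to show that every $\varphi\in\Hom_R(I,R)$ actually lands in $J$, and this is immediate from the previous step: $\varphi(I)=rI\subseteq I\subseteq J$. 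This forces $\Hom_R(I,R/J)=0$.

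For part (ii), the strategy is to exhibit a nonzero map $I\to R/J$ whenever $I\ne 0$, contradicting part (i). The hypothesis $\depth_R(R/J)=0$ means $\fm\in\Ass(R/J)$, so the socle $\Soc(R/J)=(0:_{R/J}\fm)$ is a nonzero $k$-vector space, where $k=R/\fm$. Since $\Soc(R/J)\hookrightarrow R/J$, left exactness of $\Hom_R(I,-)$ together with (i) gives $\Hom_R(I,\Soc(R/J))=0$. Choosing a $k$-basis of $\Soc(R/J)$ realizes it as a direct sum of copies of $k$, so $\Hom_R(I,k)=0$. By adjunction $\Hom_R(I,k)=\Hom_k(I/\fm I,k)$, hence $I/\fm I=0$, and Nakayama's lemma (applicable since $I$ is finitely generated over the local ring $R$) gives $I=0$.

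There is no real obstacle here; the only delicate point is ensuring that in part (i) one is careful about the direction of the liftings, namely that the relevant map $\Hom_R(I,J)\to\Hom_R(I,R)$ is surjective rather than merely injective. Once one uses the crucial fact $rI\subseteq I$ (valid because $I$ is an ideal, not just a submodule of $R$), both parts fall into place. The argument never requires any hypothesis on $R$ beyond being Noetherian local, which matches the generality of the statement.
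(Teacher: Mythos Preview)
Your proof of part (i) is essentially identical to the paper's: both apply $\Hom_R(-,R)$ to $0\to I\to R\to R/I\to 0$ to see that every $f\in I^\ast$ is multiplication by some $c\in R$, then use $\Ext^1_R(I,J)=0$ on $0\to J\to R\to R/J\to 0$ and the containment $cI\subseteq I\subseteq J$ to conclude $\Hom_R(I,J)\to\Hom_R(I,R)$ is surjective.

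For part (ii) you take a different, slightly more self-contained route. The paper invokes \cite[1.2.3]{BH}: from $\Hom_R(I,R/J)=0$ one extracts an $R/J$-regular element inside $\ann_R(I)$, which is impossible in $\fm$ when $\depth_R(R/J)=0$, forcing $\ann_R(I)=R$. You instead exploit $\depth_R(R/J)=0$ directly via the embedding $k\hookrightarrow R/J$, deduce $\Hom_R(I,k)=0$ by left exactness, and finish with Nakayama. Both arguments are correct and of comparable length; yours avoids the external citation, while the paper's makes the role of the annihilator explicit. Either way the content is the same equivalence between $\Hom_R(I,-)$ killing a depth-zero module and $I$ vanishing.
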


\begin{proof} 
(i) As $\Ext_R^1(I,J)=0$, the natural exact sequence $0\to J\xrightarrow{}R\to R/J\to0$ induces the exact sequence $0\to\Hom_R(I,J)\xrightarrow{\xi}\Hom_R(I,R)\to\Hom_R(I,R/J)\to0$.
Similarly, since $\Ext_R^1(R/I,R)=0$, it follows from the natural exact sequence $0\to I\xrightarrow{\alpha}R\to R/I\to0$ that $\alpha^{\ast}:R^{\ast} \to I^{\ast}$ is surjective.

Let $f\in I^{\ast}$. As $\alpha^{\ast}$ is surjective, $f$ factors through $\alpha$, that is, there exists $c\in R$ such that $f=c \cdot \alpha$. Now define a map $g:I\to R$ given by $g(x)=c \cdot x$ for all $x\in I$. Then $g \in \Hom_R(I,J)$ since $I\subseteq J$.
Therefore $f=\xi(g)$ and so $\xi$ is surjective. This implies that $\Hom_R(I,R/J)=0$. 

(ii) We know by part (i) that $\Hom_R(I,R/J)=0$. So \cite[1.2.3]{BH} implies that there exists a non zero-divisor on $R/J$ which is contained in $\ann_R(I)$. As $\depth_R(R/J)=0$, this yields a contradiction if $\ann_R(I)\subseteq \fm$. So $\ann_R(I)=R$, that is, $I=0$.
\end{proof}

\begin{cor}\label{H4} Let $R$ be a ring such that $\depth(R)=0$ and let $I$ and $J$ be ideals of $R$. Assume $I$ and $J$ are totally reflexive as $R$-modules and $I\subseteq J$. If $\Ext_R^1(\Omega_R^n I, \Omega_R^n J)=0$ for some $n\in\ZZ$, then $I=0$.
\end{cor}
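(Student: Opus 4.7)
The strategy is to reduce to the case $n=0$ and apply Lemma~\ref{goodprop}(ii); for this I need to verify its two additional hypotheses $\Ext^1_R(R/I,R)=0$ and $\depth_R(R/J)=0$.

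For the reduction, observe that since $I$ and $J$ are totally reflexive, so are $\Omega_R^nI$ and $\Omega_R^nJ$ for every $n\in\ZZ$ by \ref{c3}(iv). Iterating the dimension-shifting isomorphisms $\Ext^i_R(\Omega_RM,N)\cong\Ext^{i+1}_R(M,N)$ (valid for all $i\geq1$) and $\Ext^{i+1}_R(M,\Omega_RN)\cong\Ext^i_R(M,N)$ (valid for $i\geq1$ when $M$ is totally reflexive, via \ref{extiso}) gives $\Ext^1_R(\Omega_R^nI,\Omega_R^nJ)\cong\Ext^1_R(I,J)$ for $n\geq0$. The case $n<0$ reduces to this by applying it to $\Omega_R^{-n}I$ and $\Omega_R^{-n}J$, using $\Omega_R^n\Omega_R^{-n}M\overset{st}{\cong}M$ from \ref{c3}(iv). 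Hence $\Ext^1_R(I,J)=0$.

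For $\Ext^1_R(R/I,R)=0$, apply $\Hom_R(-,R)$ to $0\to I\to R\to R/I\to0$ and use $\Ext^i_R(I,R)=0$ for $i\geq1$ to obtain the exact sequence
\[
0\to\ann_R(I)\to R\to I^{\ast}\to C\to0,\qquad C:=\Ext^1_R(R/I,R).
\]
Split this through $R/\ann_R(I)$ and dualize the piece $0\to R/\ann_R(I)\to I^{\ast}\to C\to0$; using $I^{\ast\ast}\cong I$ (reflexivity) and $\Ext^1_R(I^{\ast},R)=0$ (total reflexivity), the resulting sequence $0\to\Hom_R(C,R)\to I\to\Hom_R(R/\ann_R(I),R)\to\cdots$ identifies the middle arrow as the natural inclusion $I\hookrightarrow\ann_R(\ann_R(I))\subseteq R$, which is injective. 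Therefore $\Hom_R(C,R)=0$. Since $\depth_R(R)=0$ yields $\Soc(R)=\Hom_R(k,R)\neq0$, if $C\neq0$ then Nakayama provides a surjection $C\twoheadrightarrow k$, inducing an injection $0\neq\Hom_R(k,R)\hookrightarrow\Hom_R(C,R)=0$, a contradiction. Hence $C=0$.

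Applying the same dualization argument to $J$ yields $\Ext^1_R(R/J,R)=0$, and the corresponding sequence simplifies to $0\to\ann_R(J)\to R\to J^{\ast}\to0$; dualizing and using $J^{\ast\ast}\cong J$ gives $R/J\cong\Hom_R(\ann_R(J),R)$. The socle of this module equals $\Hom_R(\ann_R(J)/\fm\ann_R(J),\Soc(R))$, which is nonzero because $\Soc(R)\neq0$ and $\ann_R(J)\neq0$ (the latter since $J\subseteq\fm$ and any socle element of $R$ annihilates $\fm$, hence $J$). Thus $\depth_R(R/J)=0$, and Lemma~\ref{goodprop}(ii) concludes $I=0$. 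The main subtlety is the double-dualization step: identifying the middle arrow as the natural (hence injective) inclusion $I\hookrightarrow\ann_R(\ann_R(I))$ is what drives $\Hom_R(C,R)=0$, after which the socle argument enabled by $\depth_R(R)=0$ cleanly forces $C=0$.
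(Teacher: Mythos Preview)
Your proof is correct, and the overall scaffold---reduce to $\Ext^1_R(I,J)=0$ via \ref{extiso}, then feed the three hypotheses into Lemma~\ref{goodprop}(ii)---is exactly the paper's. Where you diverge is in how you verify $\Ext^1_R(R/I,R)=0$ and $\depth_R(R/J)=0$. The paper dispatches both in one line by observing that $R/I$ and $R/J$ are themselves totally reflexive: since $I=\Omega_R(R/I)$ is totally reflexive one has $\Gdim_R(R/I)\le 1$, and the Auslander--Bridger formula together with $\depth(R)=0$ forces $\Gdim_R(R/I)=0$; the two needed facts are then immediate consequences of total reflexivity. Your route instead rederives these facts by hand---the double-dualization argument to get $\Hom_R(C,R)=0$, the socle trick to kill $C$, and the identification $R/J\cong\Hom_R(\ann_R(J),R)$ to exhibit a nonzero socle. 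This is more elementary in that it avoids invoking G-dimension theory, and the identification of the map $I\to\ann_R(\ann_R(I))$ with the canonical inclusion is a nice observation; the price is a noticeably longer argument. One small point: your claim $\ann_R(J)\neq 0$ uses $J\subseteq\fm$, so you are tacitly assuming $J\ne R$; the paper does the same (the statement is vacuous or false otherwise), so this is not a defect of your argument.
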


\begin{proof} Note that $\Ext_R^1(R/I,R)=0$ and $\depth_R(R/J)=\depth(R)$ since $R/I$ and $R/J$ are totally reflexive $R$-modules. Moreover, it follows that $\Ext_R^1(I,J)\cong \Ext_R^1(\Omega_R^n I, \Omega_R^n J)=0$; see \ref{extiso}. Hence, by Lemma \ref{goodprop}(ii), we have that $I=0$.
\end{proof}

In passing let us note that Corollary \ref{H4} recovers the following result of Lindo; see \cite[3.9]{Lindo2}:

\begin{cor} Let $R$ be an Artinian Gorenstein ring and let $M \overset{st}{\cong} \Omega_R^n I $ for some $n\in \ZZ$. If $\Ext_R^1(M,M)=0$, then $M$ is free.
\end{cor}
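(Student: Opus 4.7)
My plan is to derive this corollary as an immediate specialization of Corollary \ref{H4}, taking $J:=I$. The first step is to observe that over an Artinian Gorenstein ring every finitely generated module is maximal Cohen-Macaulay of depth zero, and hence totally reflexive. In particular the ideal $I$ is totally reflexive, which makes $\Omega^n_R I$ well defined (as a stable isomorphism class) for every $n\in\ZZ$, with negative indices interpreted as cosyzygies as in \ref{c3}.

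The second step is to transfer the vanishing hypothesis $\Ext^1_R(M,M)=0$ to $\Omega^n_R I$. Since $M$ and $\Omega^n_R I$ are stably isomorphic, and free modules are $\Ext^1_R(-,-)$-acyclic in either argument, we have $\Ext^1_R(M,M)\cong\Ext^1_R(\Omega^n_R I,\Omega^n_R I)$, and hence the vanishing passes to $\Ext^1_R(\Omega^n_R I,\Omega^n_R I)=0$.

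The final step is to invoke Corollary \ref{H4} with the choice $J:=I$: the inclusion $I\subseteq J$ is automatic, both ideals are totally reflexive, and the required $\Ext^1$ vanishing has just been established. The corollary then forces $I=0$, whence $M\overset{st}{\cong}\Omega^n_R I\overset{st}{\cong}0$; that is, $M$ is stably free. Since $R$ is local, stably free modules are free, so $M$ is free.

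There is no substantive obstacle in this argument; the only points that require mention are the well-definedness of $\Omega^n_R I$ for negative $n$ and the invariance of $\Ext^1$ under stable isomorphism. Both are standard consequences of the fact that, in the Artinian Gorenstein setting, every module is totally reflexive.
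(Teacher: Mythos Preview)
Your argument is correct and follows precisely the route the paper intends: the corollary is stated immediately after Corollary~\ref{H4} as a direct specialization with $J:=I$, using that every module over an Artinian Gorenstein ring is totally reflexive and that $\Ext^1$ is invariant under stable isomorphism. The paper does not spell out a proof beyond noting that Corollary~\ref{H4} recovers this result of Lindo, so your write-up is in fact more detailed than what appears there.
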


\section{Proof of part (3) of Theorem \ref{yenithm}}

In this section we give a proof of part (3) of Theorem \ref{yenithm}; the result follows as an immediate corollary of Theorem \ref{genel}. First we need to make several preparations.

\begin{prop} \label{2-periodic-prop} Let $M$ and $N$ be $R$-modules. Assume:
\begin{enumerate}[\rm(i)]
\item $[M^{\ast}]=0$ in $\overline{\G}(R)_{\QQ}$.
\item $M$ is generically free. 
\item $M$ is $2$-periodic and rigid.
\end{enumerate}
Then $M$ is free.
\end{prop}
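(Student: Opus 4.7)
The aim is to show that $\Tor_1^R(M,\Tr_R M) = 0$, since \ref{Tor2}(i) then forces $M$ to be free. My strategy is to apply the Tor-rigidity result \ref{teta} to the pair $(M,\Tr_R M)$ and combine it with a direct Tor-vanishing in degree two. As a warm-up, since $M$ is 2-periodic it is stably a second syzygy and hence reflexive; this yields $\Ext^1_R(\Tr_R M,R) = \Ext^2_R(\Tr_R M,R) = 0$ via the standard identification with the kernel and cokernel of the biduality map $M \to M^{**}$. In addition, $(\ref{tr}.1)$ provides $M^* \overset{st}{\cong} \Omega^2_R \Tr_R M$, so that $[\Tr_R M] = [M^*] = 0$ in $\overline{\G}(R)_{\QQ}$, using that $[\Omega^2 X] = [X]$ in the reduced Grothendieck group.

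The key preliminary is to promote the hypothesis ``generically free'' to the stronger ``vector bundle,'' because \ref{teta} requires $\pd_{R_\fp}(M_\fp) < \infty$ on the punctured spectrum, which for a 2-periodic module is equivalent to $M_\fp$ being free. I plan to achieve this by induction on $d = \dim R$. The base case $d = 0$ (Artinian) is precisely the content of Proposition \ref{thmintroproof1}. For the inductive step with $d > 0$ and $\fp \in \Spec R \setminus \{\fm\}$, I verify that the three hypotheses of the proposition descend to $M_\fp$ over $R_\fp$: the class relation $[M^*] = 0$ localizes through the natural map $\overline{\G}(R)_{\QQ} \to \overline{\G}(R_\fp)_{\QQ}$; generic freeness descends because each associated prime of $R_\fp$ is the extension of some $\fq \in \Ass R$ contained in $\fp$; and 2-periodicity and rigidity are trivially preserved by localization. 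Since $\dim R_\fp < d$, the inductive hypothesis yields $M_\fp$ free, and hence $M$ is a vector bundle.

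With $M$ a vector bundle in hand, \ref{teta} applies to give that $(M,\Tr_R M)$ is Tor-rigid. To produce the vanishing that triggers the rigidity, I apply \ref{Tor2}(v) with $i = 2$: using $\Ext^2_R(\Tr_R M,R) = 0$, it yields $\Tor_2^R(\Tr_R M,M) \cong \Ext^1_R(\Omega^{-2}_R M,M)$. The 2-periodicity of $M$ together with the syzygy-cosyzygy identities in \ref{c3} gives $\Omega^{-2}_R M \overset{st}{\cong} M$, so the Ext in question reduces to $\Ext^1_R(M,M) = 0$ by rigidity. Tor-rigidity then propagates the vanishing upward to $\Tor_3^R(M,\Tr_R M) = 0$; the 2-periodicity of Tor in the first argument (coming from $M \overset{st}{\cong} \Omega^2_R M$) yields $\Tor_1^R(M,\Tr_R M) \cong \Tor_3^R(M,\Tr_R M) = 0$, and \ref{Tor2}(i) completes the proof.

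The principal obstacle is the inductive step of the middle paragraph, where one must carefully verify that the hypothesis $[M^*] = 0$ in $\overline{\G}(R)_{\QQ}$ really does descend under localization of the Grothendieck group and that the remaining hypotheses pass to $R_\fp$. A secondary subtlety is justifying the identity $\Omega^{-2}_R M \overset{st}{\cong} M$: while \ref{c3}(iv) records such compatibility for totally reflexive modules, in our setting one must derive the relevant case directly from reflexivity of $M$ and the 2-periodicity via the relations recorded in \ref{c3}(ii)--(iii).
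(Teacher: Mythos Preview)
Your reduction to the vector-bundle case by induction on $\dim R$ is the same as the paper's, and your idea to invoke \ref{teta} is exactly right. The gap is in the choice of the second module and in the step you yourself flag as a ``secondary subtlety.''

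You pair $M$ with $\Tr_R M$ and then need $\Tor_2^R(M,\Tr_R M)=0$ to ignite the Tor-rigidity. Via \ref{Tor2}(v) and the vanishing $\Ext^2_R(\Tr_R M,R)=0$ this reduces to $\Ext^1_R(\Omega^{-2}_R M,M)=0$, and you want to conclude by asserting $\Omega^{-2}_R M\overset{st}{\cong} M$. But this identity is not available here: Proposition~\ref{2-periodic-prop} does \emph{not} assume $M$ is totally reflexive or that $R$ is Gorenstein, and \ref{c3}(iv) genuinely needs total reflexivity. From \ref{c3}(ii) and (\ref{tr}.1) one only gets $\Omega^{-2}_R M\overset{st}{\cong}\Tr_R(M^\ast)$ and $\Omega^2_R(\Tr_R M^\ast)\overset{st}{\cong} M^{\ast\ast}\cong M$; equivalently, the spliced pushforward sequences give $\Omega^2_R(\Omega^{-2}_R M)\overset{st}{\cong} M\overset{st}{\cong}\Omega^2_R M$. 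Since $\Omega^2_R$ is not injective on stable isomorphism classes over an arbitrary local ring, you cannot cancel to obtain $\Omega^{-2}_R M\overset{st}{\cong} M$. Nor can one deduce that $M^\ast$ (or $\Tr_R M^\ast$) is $2$-periodic from the $2$-periodicity of $M$ without a Gorenstein-type hypothesis. So the step ``$\Ext^1_R(\Omega^{-2}_R M,M)\cong\Ext^1_R(M,M)$'' is unjustified in this generality.

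The paper sidesteps this entirely by pairing $M$ with $\Tr_R\Omega_R M$ instead of $\Tr_R M$, so that \ref{Tor2}(ii) applies directly once the pair is Tor-rigid. To feed \ref{teta} one only needs $[\Tr_R\Omega_R M]=0$ in $\overline{\G}(R)_\QQ$, equivalently $[(\Omega_R M)^\ast]=0$. Dualizing $0\to\Omega_R M\to F\to M\to 0$ gives
\[
0\to M^\ast\to F^\ast\to(\Omega_R M)^\ast\to\Ext^1_R(M,R)\to 0,
\]
and since $M$ is a vector bundle, $\Ext^1_R(M,R)$ has finite length, hence class zero; thus $[(\Omega_R M)^\ast]=-[M^\ast]=0$. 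This is a pure Grothendieck-group computation and avoids any cosyzygy identity. If you simply replace $\Tr_R M$ by $\Tr_R\Omega_R M$ in your argument and invoke \ref{Tor2}(ii), your proof goes through and becomes essentially the paper's.
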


\begin{proof} We may assume, by induction on the dimension of $R$, that $\dim(R)>0$ and $M$ is a vector bundle. Hence, by \ref{Tor2}(ii) and  \ref{teta}, it is enough to see that $[\Tr_R \Omega_R M]=0$ in $\overline{\G}(R)_{\QQ}$, or equivalently $[(\Omega_R M)^{\ast}]=0$ in $\overline{\G}(R)_{\QQ}$. Note that $\Ext^1_R(M,R)$ has finite length since $M$ is a vector bundle. Therefore, $[\Ext^1_R(M,R)]=0$ in $\overline{\G}(R)_{\QQ}$. So, by dualizing the syzygy exact sequence $0 \to \Omega_R M \to F \to M \to 0$, we see that $[(\Omega_R M)^{\ast}]=0$ in $\overline{\G}(R)_{\QQ}$ if and only if $[M^{\ast}]=0$ in $\overline{\G}(R)_{\QQ}$. Thus $[\Tr_R \Omega_R M]=0$ in $\overline{\G}(R)_{\QQ}$ as we assume $[M^{\ast}]=0$.
\end{proof}

\begin{cor}  \label{2-periodic-prop-cor} Let $R$ be a Gorenstein ring and let $M$ and $N$ be $R$-modules. Assume:
\begin{enumerate}[\rm(i)]
\item $[M]=0$ in $\overline{\G}(R)_{\QQ}$.
\item $M$ is generically free. 
\item $M$ is $2$-periodic and rigid.
\end{enumerate}
Then $M$ is free.
\end{cor}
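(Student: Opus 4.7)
The plan is to deduce this as a direct corollary of Proposition \ref{2-periodic-prop}. Hypotheses (ii) and (iii) of that proposition coincide with our hypotheses, so the only thing I need to verify is hypothesis (i), namely that $[M^{\ast}]=0$ in $\overline{\G}(R)_{\QQ}$. The natural bridge is \ref{gglist}(iv), which, in the Gorenstein setting, gives the equivalence $[M]=0 \iff [M^{\ast}]=0$ provided that $M$ is maximal Cohen--Macaulay on the punctured spectrum. So the plan reduces to checking this local MCM condition.

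To check it, I would first note that since $M$ is $2$-periodic we have $M \overset{st}{\cong} \Omega_R^2 M$, and iterating this stable isomorphism yields $M \overset{st}{\cong} \Omega_R^{2n} M$ for every $n \geq 1$. Localizing at any $\fp \in \Spec(R)\setminus\{\fm\}$ and taking $n$ large enough that $2n \geq \dim R_{\fp}$, we find that $M_{\fp}$ is stably isomorphic to a syzygy of order at least $\dim R_{\fp}$ over the Gorenstein local ring $R_{\fp}$. Such a module is maximal Cohen--Macaulay, so $M_{\fp}$ is maximal Cohen--Macaulay for every $\fp \neq \fm$, as required.

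With the punctured-spectrum MCM condition in hand, \ref{gglist}(iv) applies and yields $[M^{\ast}]=0$ in $\overline{\G}(R)_{\QQ}$ from hypothesis (i). All three assumptions of Proposition \ref{2-periodic-prop} are thereby satisfied, and that proposition immediately produces the conclusion that $M$ is free. I do not anticipate a genuine obstacle here; the corollary is essentially a bookkeeping translation between the class $[M]$ and the class $[M^{\ast}]$ in the reduced Grothendieck group, made possible by the Gorenstein hypothesis together with the automatic MCM property that $2$-periodicity forces on the punctured spectrum.
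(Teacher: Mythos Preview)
Your proposal is correct and follows the same approach as the paper: both deduce $[M^{\ast}]=0$ from $[M]=0$ via \ref{gglist}(iv) and then invoke Proposition~\ref{2-periodic-prop}. The only cosmetic difference is that the paper notes directly that $M$ itself is maximal Cohen--Macaulay (since $2$-periodicity forces $\depth_R M\ge\depth R$; cf.~\ref{stiso}), whereas you verify the weaker punctured-spectrum condition needed for \ref{gglist}(iv) by localizing---either route suffices.
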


\begin{proof} Note that $M$ is maximal Cohen-Macaulay since $M$ is $2$-periodic. Therefore, $[M^{\ast}]=0$ in $\overline{\G}(R)_{\QQ}$; see \ref{gglist}(iv). Thus $M$ is free by Proposition \ref{2-periodic-prop}.
\end{proof}

\begin{chunk} \label{Yas} Let $R$ be a Cohen-Macaulay ring with a canonical module $\omega_R$ and let $M$ be a totally reflexive $R$-module. Then $\Tor_i^R(M, \omega_R) = 0$ for all $i>0$ and $M\otimes_R\omega_R$ is maximal Cohen-Macaulay; see, for example, \cite[2.11]{Yass}.
\end{chunk}

In the following $(-)^\dagger$ denotes $\Hom_R(-, \omega_R)$.

\begin{lem}\label{totref} Let $R$ be a Cohen-Macaulay ring with a canonical module $\omega_R$, and let $M$ and $N$ be totally reflexive $R$-modules. Then the following hold:
\begin{enumerate}[\rm(i)]
\item $(\Tr_RM)^\dagger \cong \Omega_R^2 M \otimes_R \omega_R$.
\item	$\Ext_R^i(M, N) \cong \Ext_R^i(M \otimes_R \omega_R, N\otimes_R \omega_R)$ for all $i \ge 0$.
\end{enumerate}
\end{lem}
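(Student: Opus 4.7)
The plan is to prove (i) by comparing the kernels of two maps $P_1\otimes_R\omega_R\to P_0\otimes_R\omega_R$, one coming from applying $(-)^\dagger=\Hom_R(-,\omega_R)$ to the defining sequence of $\Tr_R M$ and the other from tensoring a truncated minimal free resolution of $M$ with $\omega_R$; and to prove (ii) by combining the canonical $\omega_R$-duality on MCM modules with tensor-hom adjunction and \ref{extiso}.

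For (i), I would start from a minimal free presentation $P_1\xrightarrow{d}P_0\to M\to 0$. Applying $(-)^\dagger$ to the right-exact tail $P_0^*\to P_1^*\to\Tr_RM\to 0$ of the defining sequence of $\Tr_R M$ yields the left-exact sequence
\[
0\to(\Tr_RM)^\dagger\to(P_1^*)^\dagger\to(P_0^*)^\dagger.
\]
For a free module $P$, the natural iso $(P^*)^\dagger\cong P\otimes_R\omega_R$ given by $p\otimes w\mapsto(f\mapsto f(p)w)$ identifies $\Hom_R(d^*,\omega_R)$ with $d\otimes\id_{\omega_R}$, and hence $(\Tr_RM)^\dagger\cong\ker(d\otimes\id)$. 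On the other hand, since $M$ and $\Omega_RM$ are totally reflexive, \ref{Yas} gives $\Tor_1^R(M,\omega_R)=\Tor_1^R(\Omega_RM,\omega_R)=0$, so tensoring the syzygy sequences $0\to\Omega_RM\to P_0\to M\to 0$ and $0\to\Omega_R^2M\to P_1\to\Omega_RM\to 0$ with $\omega_R$ preserves exactness, and splicing identifies $\Omega_R^2M\otimes_R\omega_R$ with $\ker(d\otimes\id)$. Matching the two descriptions of the kernel proves (i).

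For (ii), note first that $M\otimes\omega_R$ and $N\otimes\omega_R$ are MCM by \ref{Yas}. The canonical $\omega_R$-duality for MCM modules gives
\[
\Ext^i_R(A,B)\cong\Ext^i_R(B^\dagger,A^\dagger)\quad(i\ge 0)
\]
whenever $A,B$ are MCM; this follows from the derived chain
\[
\RHom_R(A,B)\simeq\RHom_R(A,\RHom_R(B^\dagger,\omega_R))\simeq\RHom_R(A\ltensor B^\dagger,\omega_R)\simeq\RHom_R(B^\dagger,A^\dagger),
\]
using $\RHom_R(X,\omega_R)\simeq X^\dagger$ and $X^{\dagger\dagger}\cong X$ for MCM $X$. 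Applying this with $A=M\otimes\omega_R$ and $B=N\otimes\omega_R$, together with the adjunction identity
\[
(X\otimes_R\omega_R)^\dagger\cong\Hom_R(X,\Hom_R(\omega_R,\omega_R))\cong X^*
\]
(valid for any $R$-module $X$ since $\Hom_R(\omega_R,\omega_R)\cong R$), gives $\Ext^i_R(M\otimes\omega_R,N\otimes\omega_R)\cong\Ext^i_R(N^*,M^*)$. By \ref{extiso} for $i\ge 1$ (and the $(-)^*$-duality on totally reflexive modules for $i=0$), the right-hand side equals $\Ext^i_R(M,N)$.

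The main technical point in (i) is the naturality check that $\Hom_R(d^*,\omega_R)$ becomes $d\otimes\id$ under $(P^*)^\dagger\cong P\otimes_R\omega_R$; this is routine but must be tracked on generators. In (ii), the less elementary input is the higher-$\Ext$ form of $\omega_R$-duality on MCM modules, which is standard but would benefit either from a citation or from spelling out the derived $\RHom$ computation above.
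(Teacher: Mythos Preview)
Your argument for (i) is essentially identical to the paper's: both identify $(\Tr_RM)^\dagger$ with $\ker(d\otimes\id_{\omega_R})$ via the natural isomorphism $(P^*)^\dagger\cong P\otimes_R\omega_R$, and then match this kernel with $\Omega_R^2M\otimes_R\omega_R$ using the Tor-vanishing from \ref{Yas}. (The paper additionally records the full four-term exact sequence after applying $(-)^\dagger$, invoking that $\Tr_RM$ is maximal Cohen--Macaulay, but only the left-exact part is needed.)

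For (ii) your route genuinely differs from the paper's. The paper argues directly:
\[
\RHom_R(M\otimes_R\omega_R,\,N\otimes_R\omega_R)\;\simeq\;\RHom_R\bigl(M,\,\RHom_R(\omega_R,\,N\ltensor_R\omega_R)\bigr)\;\simeq\;\RHom_R(M,N),
\]
using derived adjunction and the isomorphism $\RHom_R(\omega_R,\,N\ltensor_R\omega_R)\simeq N$ for totally reflexive $N$ (from \cite[4.2.6]{Gdimbook}). Your approach instead passes through canonical duality on maximal Cohen--Macaulay modules to reach $\Ext_R^i(N^*,M^*)$, and then invokes \ref{extiso} to return to $\Ext_R^i(M,N)$. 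Both are correct; the paper's path is a single derived-category step and avoids the detour through \ref{extiso}, while yours has the advantage of relying only on the standard $\omega_R$-duality for MCM modules together with a fact already recorded earlier in the paper.
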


\begin{proof} In the proof we make use of the fact that $\Tor_i^R(M, \omega_R) = 0$ for all $i>0$; see \ref{Yas}.

(i) Take a minimal presentation of $M$, say $F_1 \xrightarrow{f} F_0 \to M \to 0$. By dualizing this presentation, we get the following exact sequence:
\begin{align}\tag{\ref{totref}.1}
\begin{aligned}
0 \to M^* \to F_0^* \xrightarrow{f^*} F_1^* \to \Tr_R M \to 0	
\end{aligned}
\end{align}
Then, by applying $(-)^\dagger$ to (\ref{totref}.1), we get the exact sequence
\begin{align}\tag{\ref{totref}.2}
\begin{aligned}
0 \to (\Tr_R M)^\dagger \to F_1^{*\dagger} \xrightarrow{f^{*\dagger}} F_0^{*\dagger} \to M^{*\dagger} \to 0
\end{aligned}
\end{align}
since $\Tr_R M$ is totally reflexive, and hence is maximal Cohen-Macaulay.

Recall that $\Tor_i^R(M, \omega_R) = 0$ for all $i>0$. Hence, by applying $-\otimes_R\omega_R$ to the short exact sequence $0 \to \Omega_R^2 M \to F_1 \xrightarrow{f} F_0 \to M \to 0$, we obtain the exact sequence:
\begin{align}\tag{\ref{totref}.3}
\begin{aligned}
0 \to \Omega_R^2 M \otimes_R \omega_R \to F_1 \otimes_R \omega_R \xrightarrow{f \otimes 1} F_0 \otimes_R \omega_R \to M \otimes_R \omega_R \to 0.
\end{aligned}
\end{align}

Note that there is a commutative diagram as:
\[
\xymatrix{
& F_1^{*\dagger}  \ar[r]^{f^{*\dagger}} \ar[d]_{\cong} & F_0^{*\dagger}  \ar[d]_{\cong} &   & \\
 & F_1 \otimes_R \omega_R  \ar[r]^{f \otimes 1}  & F_0 \otimes_R \omega_R   &  & \\
}
\]
Therefore, in view of (\ref{totref}.2) and (\ref{totref}.3), we conclude that:
$$
(\Tr_R M)^\dagger \cong \Ker(F_1^{*\dagger} \xrightarrow{f^{*\dagger}} F_0^{*\dagger}) \cong \Ker(F_1 \otimes_R \omega_R \xrightarrow{f \otimes 1} F_0 \otimes_R \omega_R) \cong \Omega_R^2 M \otimes_R \omega_R.
$$ 

(ii) As $\Tor_i^R(M, \omega_R) = 0$ for all $i>0$, the following isomorphisms hold in the derived category of $R$-modules: 
\begin{align*}
\RHom_R(M \otimes_R \omega_R, N\otimes_R \omega_R) &\simeq \RHom_R(M \ltensor_R \omega_R, N \ltensor_R \omega_R) \\
&\simeq \RHom_R\big(M , \RHom_R(\omega_R, N \ltensor_R \omega_R)\big) \\
&\simeq \RHom_R(M, N),	 	 
\end{align*}
where the second and the third isomorphisms follow from \cite[A.4.21]{Gdimbook} and \cite[4.2.6]{Gdimbook}, respectively. This establishes the required isomorphism of $\Ext$ modules.
\end{proof}

\begin{chunk} \label{KYT} Let $R$ be a Cohen-Macaulay ring with a canonical module $\omega_R$ and let $X$ and $Y$ be $R$-modules. If $Y$ is maximal Cohen-Macaulay, then $X\otimes_RY$ is maximal Cohen-Macaulay if and only if $Y^{\dagger}\otimes_R \Tr_R X$ is maximal Cohen-Macaulay; see \cite[2.2]{KYT}.
\end{chunk}

\begin{lem} \label{conf} Let $R$ be a $d$-dimensional Cohen-Macaulay ring with a canonical module $\omega_R$, and let $M$ and $N$ be $R$-modules. Assume the following hold:
\begin{enumerate}[\rm(i)]
\item $M$ and $N$ are vector bundles.
\item $M$ is totally reflexive.
\item $N$ is maximal Cohen-Macaulay
\end{enumerate}
Then $\Ext_R^i(N, \Omega^2_RM \otimes_R \omega_R)=0$ for all $i=1,2, \ldots, d$ if and only if $\Ext_R^i(M, N)=0$ for all $i=1,2, \ldots, d$.
\end{lem}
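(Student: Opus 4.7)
The plan is to unravel both vanishing conditions via \ref{Tor2}(vi) and Lemma \ref{totref}(i), and observe that each is equivalent to the single statement that $N\otimes_R\Tr_R M$ is maximal Cohen-Macaulay.

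First, I would rewrite the second module in the left-hand hypothesis. By Lemma \ref{totref}(i), $\Omega^2_R M \otimes_R \omega_R \cong (\Tr_R M)^{\dagger}$. Since $M$ is totally reflexive, so is $\Tr_R M$; in particular $\Tr_R M$ is maximal Cohen-Macaulay, hence $(\Tr_R M)^{\dagger}$ is also maximal Cohen-Macaulay and satisfies $(\Tr_R M)^{\dagger\dagger}\cong \Tr_R M$ by the duality for MCM modules. Moreover, since $M$ is a vector bundle, so is $\Tr_R M$, and therefore so is $(\Tr_R M)^{\dagger}$.

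Next, I would apply \ref{Tor2}(vi) on each side. For the left-hand side, take the ``$M$'' of \ref{Tor2}(vi) to be $N$ (a vector bundle by assumption) and the ``$N$'' to be $(\Tr_R M)^{\dagger}$ (MCM by the previous paragraph); this yields
\[
\Ext^i_R\bigl(N,\Omega^2_R M \otimes_R \omega_R\bigr)=0 \ \text{for}\ i=1,\dots,d
\ \iff\ N\otimes_R (\Tr_R M)^{\dagger\dagger}\ \text{is MCM}
\ \iff\ N\otimes_R \Tr_R M\ \text{is MCM}.
\]
For the right-hand side, with $M$ and $N$ as in our hypotheses, \ref{Tor2}(vi) gives
\[
\Ext^i_R(M,N)=0 \ \text{for}\ i=1,\dots,d\ \iff\ M\otimes_R N^{\dagger}\ \text{is MCM}.
\]

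Finally, to bridge the two, I would invoke \ref{KYT} with $X=M$ and $Y=N^{\dagger}$ (which is MCM since $N$ is MCM). This gives $M\otimes_R N^{\dagger}$ is MCM if and only if $(N^{\dagger})^{\dagger}\otimes_R\Tr_R M\cong N\otimes_R\Tr_R M$ is MCM, using $N^{\dagger\dagger}\cong N$. Chaining the three equivalences yields the desired biconditional. There is no serious obstacle in the argument; the main bookkeeping is to verify at each application that the relevant modules have the right properties (vector bundle, totally reflexive, or MCM) so that \ref{Tor2}(vi) and \ref{KYT} apply, which is precisely where hypotheses (i)--(iii) are used.
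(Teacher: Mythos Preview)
Your proof is correct and follows essentially the same route as the paper's: both arguments chain together \ref{Tor2}(vi), \ref{KYT} (with $X=M$, $Y=N^{\dagger}$), and Lemma \ref{totref}(i) to show that each Ext-vanishing condition is equivalent to $N\otimes_R\Tr_R M$ being maximal Cohen-Macaulay. The only difference is cosmetic ordering---the paper writes a single chain of four equivalences starting from $\Ext^i_R(M,N)=0$, whereas you work inward from both ends; the observation that $\Tr_R M$ and $(\Tr_R M)^{\dagger}$ are vector bundles is not actually needed.
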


\begin{proof} Note that $\Tr_R M$ is totally reflexive so that $(\Tr_R M)^{\dagger}$ is maximal Cohen-Macaulay. We have the following implications:
\begin{align}\notag{}
\Ext^i_R(M,N)=0 \mbox{ for all $i=1, \ldots,d$}& \Longleftrightarrow  M\otimes_R N^{\dagger} \text{ is maximal Cohen-Macaulay}  \\ & \notag{} \Longleftrightarrow N \otimes_R \Tr_R M  \text{ is maximal Cohen-Macaulay } \\ & \notag{} \Longleftrightarrow   \Ext^i_R\big(N,(\Tr_R M)^{\dagger}\big) =0 \mbox{ for all $i=1,2, \ldots,d$} \\ & \notag{}  \Longleftrightarrow  \Ext^i_R(N,\Omega^2_RM \otimes_R \omega_R )=0 \mbox{ for all $i=1,2,\ldots,d$}
\end{align}	
Here the first implication holds by \ref{Tor2}(vi) since $M$ is a vector bundle and $N$ is maximal Cohen-Macaulay; the second implication holds by \ref{KYT} since $N^{\dagger}$ is maximal Cohen-Macaulay; the third implication holds by \ref{Tor2}(vi) since $N$ is a vector bundle and $(\Tr_R M)^{\dagger}$ is maximal Cohen-Macaulay, and the fourth implication holds by Lemma \ref{totref}(i) since $M$ is totally reflexive.
\end{proof}

Next is the main result of this section; its Gorenstein case establishes part (3) of Theorem \ref{yenithm}.

\begin{thm} \label{genel} Let $R$ be a $d$-dimensional Cohen-Macaulay local ring with a canonical module $\omega_R$, and let $M$ be an $R$-module. Assume:
\begin{enumerate}[\rm(i)]
\item $R_{\fp}$ is Gorenstein for all $\fp \in \Spec(R)-\{\fm\}$.
\item $M$ is $2n$-periodic and $\Ext^i_R(M,M)=0$ for all $i=1,3, \ldots, 2n-3$ and for some integer $n\geq 2$. 
\item $M$ is generically free, totally reflexive, and $[M^{\ast}]=0$ in $\overline{\G}(R)_{\QQ}$.
\item $\Ext_R^i(M,M \otimes_R \omega_R)=0$ for all $i=1,2, \ldots, d$.
\end{enumerate}
Then $M$ is free.	
\end{thm}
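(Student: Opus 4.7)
The plan is to reduce to the vector-bundle case by induction on $d$, then compute a Grothendieck-group class and invoke Theorem \ref{teta} together with \ref{Tor2}(ii).

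I would first induct on $d$. When $d=0$, $\Ass(R)=\{\fm\}$ and the generic-freeness hypothesis in (iii) already forces $M$ to be free, so there is nothing to prove. For $d\geq 1$ and $\fp\in\Spec(R)\setminus\{\fm\}$, the localization $R_\fp$ is Gorenstein of dimension strictly less than $d$ with $\omega_{R_\fp}=R_\fp$, and I would verify that all four hypotheses of the theorem descend to $(R_\fp,M_\fp)$: conditions (i), (ii), and the generic-freeness and totally-reflexive parts of (iii) localize trivially; the vanishing $[M^*_\fp]=0$ in $\overline{\G}(R_\fp)_\QQ$ follows from $[M^*]=0$ via the natural map $\overline{\G}(R)_\QQ\to\overline{\G}(R_\fp)_\QQ$; and localizing (iv), together with $\omega_{R_\fp}=R_\fp$, yields $\Ext^i_{R_\fp}(M_\fp,M_\fp)=0$ for $i=1,\ldots,d$, which certainly contains the required range $1,\ldots,\dim R_\fp$. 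The induction hypothesis then forces $M_\fp$ to be free, so $M$ is a vector bundle.

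Next, I would compute $[\Tr_R\Omega_R M]$ in $\overline{\G}(R)_\QQ$. Since $M$ is totally reflexive, \ref{c3}(iv) yields $\Omega_R^{-2}\Omega_R^2\Tr_R M\overset{st}{\cong}\Tr_R M$, so (\ref{tr}.1) gives $\Tr_R M\overset{st}{\cong}\Omega_R^{-2}M^*$; and by \ref{c3}(iii), $\Tr_R\Omega_R M\overset{st}{\cong}\Omega_R^{-1}\Tr_R M$. Combined with the identity $[\Omega_R^{\pm 1}X]=-[X]$ in $\overline{\G}(R)_\QQ$ and hypothesis (iii), I then conclude $[\Tr_R M]=[M^*]=0$, hence $[\Tr_R\Omega_R M]=-[\Tr_R M]=0$.

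Finally, I would apply Theorem \ref{teta} to the pair $(M,\Tr_R\Omega_R M)$: $M$ is $2n$-periodic by (ii), $M$ is a vector bundle so $\pd_{R_\fp}(M_\fp)=0<\infty$ for every $\fp\neq\fm$, and $[\Tr_R\Omega_R M]=0$ by the previous step. Therefore $(M,\Tr_R\Omega_R M)$ is Tor-rigid. Since $n\geq 2$, hypothesis (ii) also supplies $\Ext^1_R(M,M)=0$, so $M$ is rigid, and \ref{Tor2}(ii) finishes the proof that $M$ is free. The main obstacle is the bookkeeping in the first step: one must verify that hypothesis (iii) (the Grothendieck-group vanishing) and hypothesis (iv) (which explicitly involves $\omega_R$) both descend cleanly to the Gorenstein localizations $R_\fp$ so that the induction closes; once $M$ is shown to be a vector bundle, the remainder of the argument is a routine assembly of the preliminary material.
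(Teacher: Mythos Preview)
Your reduction to the vector-bundle case and the Grothendieck-group computation $[\Tr_R\Omega_RM]=0$ are both correct, but the final step fails. You invoke \ref{teta} for the pair $(M,\Tr_R\Omega_RM)$ with $M$ only $2n$-periodic ($n\ge 2$), whereas \ref{teta} requires $M$ to be \emph{$2$-periodic}. The odd phrasing ``$M$ is $2$-periodic for some $n\geq 1$'' in hypothesis (iii) of \ref{teta} is evidently a typo (the clause ``for some $n\ge1$'' is vestigial); every use of \ref{teta} in the paper---see Propositions \ref{thmintroproof1}, \ref{2-periodic-prop} and Lemma \ref{mainlemma}---is to a module that has first been arranged to be genuinely $2$-periodic. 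And Tor-rigidity does not pass from the $2$-periodic module $X=M\oplus\Omega^2_RM\oplus\cdots\oplus\Omega^{2n-2}_RM$ down to its summand $M$: the vanishing of a single $\Tor_j^R(M,N)$ does not force $\Tor_j^R(X,N)=0$.

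The gap is not a technicality. If your argument were valid it would prove the theorem using only $\Ext^1_R(M,M)=0$ from (ii) and would use (iv) solely to close the induction, never in the main step; this is strictly stronger than the theorem and would make Lemmas \ref{totref} and \ref{conf} superfluous. The paper's proof uses (iv) essentially: via Lemmas \ref{conf} and \ref{totref}(ii) it converts (iv) into $\Ext^i_R(M,\Omega^2_RM)=0$ for $1\le i\le d$, then shifts by $2n$-periodicity and total reflexivity to obtain $\Ext^{2n-1}_R(M,M)=0$. Combined with (ii) this gives $\Ext^i_R(M,M)=0$ for \emph{every} odd $i$ with $1\le i\le 2n-1$, which is exactly what is needed to make $X=\bigoplus_{j=0}^{n-1}\Omega^{2j}_RM$ rigid; Proposition \ref{2-periodic-prop} (which does apply \ref{teta}, but legitimately, to the $2$-periodic $X$) then finishes the argument.
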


\begin{proof} As $M$ is generically free, we may assume, by induction on the dimension of $R$, that $R$ has positive dimension and $M$ is a vector bundle. 

Letting $N=M \otimes_R \omega_R$, in view of \ref{Yas}, we conclude from Lemma \ref{conf} that $\Ext_R^i(M \otimes_R \omega_R, \Omega^2_R M \otimes_R \omega_R)=0$  for all $i=1, \ldots, d$. Now, Lemma \ref{totref}(ii) yields $\Ext_R^i(M, \Omega^2_RM) = 0$ for all $i=1,2, \ldots, d$.  As $M$ is $2n$-periodic, it follows that  $\Ext_R^i(M, \Omega^2_R M)=0$ for all $i=2n+1,2n+2, \ldots, 2n+d$. Note that, since $M$ is totally reflexive, we have $\Ext^i_R(M,R)=0$ for all $i>0$. Thus, we conclude that  $\Ext_R^i(M, M)=0$ for all $i=2n-1,2n, \ldots, 2n+d-2$. As we assume $\Ext^i_R(M,M)=0$ for all $i=1, 3, \ldots, 2n-3$, it follows that $\Ext_R^i(M, M)=0$ for each odd integer $i$ such that $1\leq i \leq 2n+d-2$. 

Set $X=M\oplus \Omega^2_R M \oplus \cdots \oplus \Omega^{2n-2}_R M$. Then $X$ is generically free, rigid, $2$-periodic, and $[X^{\ast}]=0$ in $\overline{\G}(R)_{\QQ}$. Thus, by Proposition \ref{2-periodic-prop}, $X$ is free. Therefore, $M$ is free.
\end{proof}

Next we give a corollary of Theorem \ref{genel}; the case where $d=1$ and $n=2$ of the corollary yields a proof of Theorem \ref{introFukano}.

\begin{cor} \label{corHiroki} Let $R$ be a $d$-dimensional Gorenstein local ring and let $M$ be an $R$-module. Assume:
\begin{enumerate}[\rm(i)]
\item $M$ is $2n$-periodic for some integer $n$ such that $1\leq n \leq (d+3)/2$. 
\item $M$ is generically free and $[M]=0$ in $\overline{\G}(R)_{\QQ}$.
\item $\Ext_R^i(M,M)=0$ for all $i=1,2, \ldots, d$.
\end{enumerate}
Then $M$ is free.
\end{cor}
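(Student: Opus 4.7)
The strategy is to derive Corollary \ref{corHiroki} from Theorem \ref{genel} together with Corollary \ref{2-periodic-prop-cor} via a case split on $n$. The key numerical observation is that the bound $n \leq (d+3)/2$ is the same as $2n - 3 \leq d$, calibrated precisely to match the odd-index Ext-vanishing hypothesis of Theorem \ref{genel}(ii).

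First I would handle $n = 1$. When $d = 0$ the ring is Artinian local, so $\Ass(R) = \{\fm\}$ and the hypothesis that $M$ is generically free already forces $M$ to be free. When $d \geq 1$, hypothesis (iii) includes $\Ext^1_R(M,M) = 0$, so $M$ is rigid; it is moreover $2$-periodic and generically free with $[M] = 0$ in $\overline{\G}(R)_\QQ$, and Corollary \ref{2-periodic-prop-cor} then applies directly to yield freeness.

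For $n \geq 2$, I would verify the hypotheses of Theorem \ref{genel} one by one, using that $\omega_R \cong R$ since $R$ is Gorenstein. Condition (i) of that theorem is automatic since localizations of Gorenstein rings are Gorenstein. For (ii), rewriting $n \leq (d+3)/2$ as $2n - 3 \leq d$ shows that the vanishing of $\Ext^i_R(M,M)$ for $i = 1, \ldots, d$ subsumes the required vanishing for odd $i$ in $\{1, 3, \ldots, 2n-3\}$. For (iii), since $M$ is $2n$-periodic the discussion in \ref{stiso} gives $\depth_R M \geq \depth R = d$, so $M$ is maximal Cohen-Macaulay; over a Gorenstein ring this is equivalent to being totally reflexive. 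Because $M$ is maximal Cohen-Macaulay everywhere, \ref{gglist}(iv) transfers the hypothesis $[M] = 0$ to $[M^\ast] = 0$ in $\overline{\G}(R)_\QQ$. Finally, condition (iv) of Theorem \ref{genel} becomes $\Ext^i_R(M,M) = 0$ for $i = 1, \ldots, d$, which is precisely hypothesis (iii) of the corollary. Theorem \ref{genel} then concludes that $M$ is free.

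The real work is concealed inside Theorem \ref{genel} and Proposition \ref{2-periodic-prop}, so no substantive obstacle remains at the corollary level. The only bookkeeping subtlety is recognizing that $(d+3)/2$ is exactly the largest $n$ for which the even-length Ext-vanishing hypothesis (iii) of the corollary still covers all the odd degrees demanded by Theorem \ref{genel}(ii); this tight matching is what lets a hypothesis of Ext-vanishing only up to degree $d$ suffice.
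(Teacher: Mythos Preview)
Your proof is correct and follows essentially the same approach as the paper: note that $n\le(d+3)/2$ is equivalent to $2n-3\le d$, and then invoke Theorem~\ref{genel} (with $\omega_R\cong R$ and \ref{gglist}(iv) to pass from $[M]=0$ to $[M^\ast]=0$). The paper's proof is a single line and does not separate out the case $n=1$, whereas you handle it explicitly via Corollary~\ref{2-periodic-prop-cor}; since Theorem~\ref{genel} is stated only for $n\ge2$, your extra case split is in fact a welcome bit of care rather than a deviation in method.
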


\begin{proof} It follows from our assumption that $2n-3 \leq d$. Hence the claim follows from Theorem \ref{genel}.
\end{proof}

\section{Proof of parts (1) and (2) of Theorem \ref{yenithm}}

In this section we prove parts (1) and (2) of Theorem \ref{yenithm}; this is a direct consequence of Corollary \ref{maincor}. We also make an observation in Proposition \ref{corGor} on periodic modules that is related to Question \ref{soru}.

The following theorem is the main result of this section:

\begin{thm} \label{mainthmnew} Let $M$ be an $R$-module. Assume:
\begin{enumerate}[\rm(i)]
\item $[M^{\ast}]=0$ in $\overline{\G}(R)_{\QQ}$.
\item $M$ is generically free.
\item $\Tr_R M$ is $2n$-periodic for some $n\geq 1$.  
\item $\Ext^1_R(\Omega^{-i}_R M, M)=0$ for all $i=4, 6, \ldots, 2n$. 
\end{enumerate}
Then $M$ is free if either (1) or (2) holds:
\begin{enumerate}[\rm(1)]
\item
\begin{enumerate}[\rm(a)]
\item $M$ is a vector bundle.
\item Either $\depth_R(M)\geq 1$ and $\Ext^1_R(\Omega^{-2}_R M, M)=0$, or $\depth_R(M\otimes_RM^{\ast})\geq 1$ and $\Ext^i_R(\Tr_R M, R)=0$ for all $i=4, 6, \ldots, 2n$.
\end{enumerate}
\item 
\begin{enumerate}[\rm(a)]
\item $R$ has no embedded primes.
\item $\Omega_R^2M \otimes_R \Tr_R M$ is torsion-free.
\item $\Ext^1_R(\Tr_R M,R)=0$ or $\Ext^i_R(\Tr_R M,R)=0$ for all $i=4, 6, \ldots, 2n$.
\end{enumerate}
\end{enumerate}
\end{thm}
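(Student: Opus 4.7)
My target is to show $\Tor_1^R(M, \Tr_R M) = 0$, from which $M$ is free by \ref{Tor2}(i). First observe that hypothesis (i) together with additivity on the transpose exact sequence in \ref{tr} gives $[\Tr_R M] = [M^*] = 0$ in $\overline{\G}(R)_\QQ$. The strategy proceeds in two parts: (I) establish $\Tor_2^R(\Tr_R M, M) = 0$ (and more generally $\Tor_{2k}^R(\Tr_R M, M) = 0$ for $k = 1, \dots, n$) using the exact sequence of \ref{Tor2}(v) and the case-specific hypotheses; (II) invoke Tor-rigidity via \ref{teta} together with the periodicity $\Tor_i^R(\Tr_R M, M) \cong \Tor_{i+2n}^R(\Tr_R M, M)$ coming from (iii) to promote $\Tor_2^R = 0$ first to $\Tor_{2n+1}^R = 0$ and hence to $\Tor_1^R = 0$.

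For part (I) in case (1), the vector bundle hypothesis makes $\Ext^i_R(\Tr_R M, R)$ and $\Tor_i^R(\Tr_R M, M)$ of finite length for every $i \geq 1$. Under the first alternative of (1)(b), the condition $\depth_R M \geq 1$ kills the $\Hom$-term in \ref{Tor2}(v) via \ref{Tor2}(iv), and combined with (iv) and the extra assumption $\Ext^1_R(\Omega^{-2}_R M, M) = 0$ one reads off $\Tor_{2k}^R(\Tr_R M, M) = 0$ for $k = 1, \dots, n$. Under the second alternative, $\Tor_2^R(\Tr_R M, M) = 0$ follows from \ref{Tor2}(iii) (positive depth of $M \otimes_R M^*$ paired with finite length of $\Tor_2^R$), and the remaining vanishings come from \ref{Tor2}(v) combined with (iv) and the assumption $\Ext^i_R(\Tr_R M, R) = 0$.

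For part (I) in case (2), generic freeness forces $M_\fp$ free, and hence $\Tor_2^R(M, \Tr_R M)_\fp = 0$, for every $\fp \in \Ass R$; because $R$ has no embedded primes, $\Ass R = \Min R$. Tensoring the syzygy sequence $0 \to \Omega_R^2 M \to F \to \Omega_R M \to 0$ with $\Tr_R M$ realises $\Tor_2^R(M, \Tr_R M)$ as a submodule of $\Omega_R^2 M \otimes_R \Tr_R M$, which is torsion-free by (2)(b); thus the associated primes of $\Tor_2^R(M, \Tr_R M)$ lie in $\Ass R$, but by the first observation they are disjoint from $\Ass R$, so $\Tor_2^R(M, \Tr_R M) = 0$. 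The higher even-index vanishings come from \ref{Tor2}(v), hypothesis (iv), and (2)(c) by an analogous support-and-depth analysis.

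For part (II), Tor-rigidity is obtained via \ref{teta} applied with $\Tr_R M$ in the periodic slot, using $[\Tr_R M] = 0$ and finite projective dimension of $(\Tr_R M)_\fp$ on the punctured spectrum; the latter is automatic in case (1) from $M$ being a vector bundle, and in case (2) it is secured by induction on $\dim R$ (applying the theorem to each $M_\fp$ over $R_\fp$ reduces to the vector bundle setting). Tor-rigidity then propagates $\Tor_2^R(\Tr_R M, M) = 0$ through all higher degrees, and the periodicity from (iii) identifies $\Tor_{2n+1}^R$ with $\Tor_1^R$, giving the desired vanishing. The conclusion then follows from \ref{Tor2}(i). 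The main obstacle is precisely this rigidity step: \ref{teta} as stated asks for zero class on the \emph{non}-periodic argument, whereas hypothesis (i) supplies this through $[\Tr_R M] = 0$; the plan is to exploit the symmetry of $\Tor$ together with the periodic role of $\Tr_R M$, and to absorb the punctured-spectrum hypothesis in case (2) by the dimension reduction described above.
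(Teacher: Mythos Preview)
Your Part (I) is correct and matches the paper: the even-index vanishings $\Tor_{2k}^R(\Tr_R M,M)=0$ for $k=1,\dots,n$ are obtained exactly as you describe, via \ref{Tor2}(iii)--(v), and the torsion-free argument you give in case~(2) for $\Tor_2$ is a valid alternative to the paper's induction-then-\ref{Tor2}(iii) route.

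The gap is in Part (II). You apply \ref{teta} with $\Tr_R M$ in the periodic slot, but \ref{teta} requires $2$-periodicity, and $\Tr_R M$ is only $2n$-periodic; nothing you say bridges this, and the ``symmetry of $\Tor$'' does not help. The paper's missing idea (encapsulated in Lemma~\ref{mainlemma}) is to \emph{manufacture} a $2$-periodic module by setting
\[
X=\Tr_R M\oplus\Omega_R^2\Tr_R M\oplus\cdots\oplus\Omega_R^{2n-2}\Tr_R M,
\]
which satisfies $\Omega_R^2 X\overset{st}{\cong}X$ because $\Tr_R M$ is $2n$-periodic. After the usual induction on $\dim R$ one has $\pd_{R_\fp}(X_\fp)<\infty$ on the punctured spectrum, and from $[M^\ast]=[\Omega_R^2\Tr_R M]=0$ one gets $[\Omega_R^{2i}\Tr_R M]=0$ for all $i$, hence $[X]=0$; so \ref{teta} applies to $X$. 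Now the point of Part~(I) becomes clear: it gives precisely $\Tor_2^R(X,M)=\bigoplus_{k=1}^n\Tor_{2k}^R(\Tr_R M,M)=0$, whence Tor-rigidity forces $\Tor_3^R(X,M)=0$, and the summand $\Omega_R^{2n-2}\Tr_R M$ of $X$ yields $\Tor_1^R(\Tr_R M,M)\cong\Tor_3^R(\Omega_R^{2n-2}\Tr_R M,M)=0$.

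This also dissolves the ``obstacle'' you flag about the class condition: once you pass to $X$, the zero-class hypothesis is supplied by $[X]=0$ (coming from $[M^\ast]=0$), so no symmetry trick is needed. In your write-up the higher even vanishings from Part~(I) are never used (rigidity from a single $\Tor_2=0$ would suffice if it held), which is a sign that the $2n$-to-$2$ reduction is missing; with the direct-sum construction those vanishings are exactly what makes $\Tor_2^R(X,M)$ vanish.
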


We defer the proof of Theorem \ref{mainthmnew} and next discuss the following consequence of it.

\begin{cor} \label{maincor} Let $M$ be a totally reflexive $R$-module. Assume:
\begin{enumerate}[\rm(i)]
\item $[M]=0$ in $\overline{\G}(R)_{\QQ}$.
\item $M$ is generically free.
\item $M$ is $2n$-periodic for some $n\geq 1$.
\item $\Ext^i_R(M,M)=0$ for all $i=1,3, \ldots, 2n-3$.
\end{enumerate}
Then $M$ is free if at least one the following holds:
\begin{enumerate}[\rm(1)]
\item $M$ is a vector bundle and $\Ext^{2n-1}_R(M,M)=0$.
\item $R$ has no embedded primes and $M\otimes_RM^{\ast}$ is torsion-free.
\end{enumerate}
\end{cor}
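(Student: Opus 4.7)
The plan is to apply Theorem \ref{mainthmnew} to $\Tr_R M$ in place of $M$. The decisive observation is that the Theorem's hypothesis (i) asks for the \emph{dual} class to vanish, and when applied to $\Tr_R M$ this condition becomes $[(\Tr_R M)^\ast]=0$ in $\overline{\G}(R)_{\QQ}$; but by (\ref{tr}.1), $(\Tr_R M)^\ast \overset{st}{\cong}\Omega_R^2 M$, and every even syzygy represents the same class as the original module in $\overline{\G}(R)$, so $[(\Tr_R M)^\ast]=[M]=0$ directly from the Corollary's assumption. This bypasses the otherwise delicate passage from $[M]=0$ to $[M^\ast]=0$ that would arise if one tried to apply the Theorem to $M$ itself.

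First I would dispose of the degenerate case $\depth(R)=0$: then $\fm\in\Ass(R)$, so generic freeness of $M$ forces $M=M_{\fm}$ to be free. Assuming $\depth(R)\geq 1$, conditions (ii), (iii) of Theorem \ref{mainthmnew} for $\Tr_R M$ are immediate: $(\Tr_R M)_{\fp}=\Tr_{R_{\fp}}(M_{\fp})$ vanishes stably at each $\fp\in\Ass(R)$, and $\Tr_R\Tr_R M\overset{st}{\cong} M$ is $2n$-periodic. Moreover, $\Tr_R M$ is itself $2n$-periodic: by \ref{c3}(iii), $\Omega_R^{-2n}\Tr_R M\overset{st}{\cong}\Tr_R\Omega_R^{2n}M\overset{st}{\cong}\Tr_R M$. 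Combining this with \ref{c3}(iv) gives $\Omega_R^{-i}\Tr_R M\overset{st}{\cong}\Omega_R^{2n-i}\Tr_R M$ for $0\leq i\leq 2n$, and a standard dimension shift together with the duality $\Ext^n_R(\Tr_R M,\Tr_R M)\cong \Ext^n_R(M,M)$ from \ref{extiso} yields
\[
\Ext^1_R(\Omega_R^{-i}\Tr_R M,\Tr_R M)\;\cong\;\Ext^{2n+1-i}_R(M,M) \quad\text{for } 1\leq i\leq 2n.
\]
Substituting $i\in\{4,6,\ldots,2n\}$ gives the vanishing of $\Ext^j_R(M,M)$ for $j\in\{2n-3,2n-5,\ldots,1\}$, which is precisely the Corollary's hypothesis (iv), so condition (iv) of the Theorem holds.

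For Corollary case (1), condition (1)(a) of the Theorem holds because $(\Tr_R M)_{\fp}=0$ whenever $M_{\fp}$ is free, so $\Tr_R M$ is a vector bundle; the first disjunct of (1)(b) applies since $\depth_R(\Tr_R M)=\depth(R)\geq 1$ ($\Tr_R M$ being totally reflexive), and the identity above with $i=2$ gives $\Ext^1_R(\Omega_R^{-2}\Tr_R M,\Tr_R M)\cong \Ext^{2n-1}_R(M,M)=0$ by the extra hypothesis of case (1). For Corollary case (2): (2)(a) is given; (2)(c) reduces to $\Ext^1_R(\Tr_R\Tr_R M,R)\cong \Ext^1_R(M,R)=0$, automatic from total reflexivity; and for (2)(b) one needs $\Omega_R^2\Tr_R M\otimes_R\Tr_R\Tr_R M$ to be torsion-free. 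Using $\Omega_R^2\Tr_R M\overset{st}{\cong}M^\ast$ and $\Tr_R\Tr_R M\overset{st}{\cong}M$, adding suitable free direct summands to both sides produces an isomorphism
\[
\Omega_R^2\Tr_R M\otimes_R\Tr_R\Tr_R M\;\oplus\; T_1 \;\cong\; M^\ast\otimes_R M\;\oplus\; T_2,
\]
where $T_1,T_2$ are direct sums of copies of the torsion-free modules $M$, $M^\ast$, $\Omega_R^2\Tr_R M$, $\Tr_R\Tr_R M$, and $R$ (each is reflexive or is $R$). Since $M\otimes_R M^\ast$ is torsion-free by hypothesis and torsion-freeness is inherited by direct summands, (2)(b) follows.

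In either case Theorem \ref{mainthmnew} gives that $\Tr_R M$ is free, whence $M\overset{st}{\cong}\Tr_R\Tr_R M\overset{st}{\cong}0$ (since $\Tr_R$ sends free modules to zero stably), so $M$ is free by Nakayama. I expect the chief technical point to be the identification $\Ext^1_R(\Omega_R^{-i}\Tr_R M,\Tr_R M)\cong \Ext^{2n+1-i}_R(M,M)$, which requires carefully stringing together the $2n$-periodicity of $\Tr_R M$, the syzygy arithmetic of \ref{c3}, and the duality in \ref{extiso}; once that identity is in hand, the remaining verifications are essentially bookkeeping against Theorem \ref{mainthmnew}.
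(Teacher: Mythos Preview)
Your proposal is correct and follows essentially the same approach as the paper: apply Theorem \ref{mainthmnew} to $N=\Tr_R M$, use $(\Tr_R M)^\ast\overset{st}{\cong}\Omega_R^2 M$ to verify condition (i), establish the key identity $\Ext^1_R(\Omega_R^{-i}N,N)\cong\Ext^{2n+1-i}_R(M,M)$ to verify condition (iv), and then check the remaining conditions for cases (1) and (2) exactly as you do. The only cosmetic difference is that the paper derives the key identity via $\Omega_R^{-i}\Tr_R M\overset{st}{\cong}\Tr_R\Omega_R^i M$ (from \ref{c3}(iii)) followed by the duality of \ref{extiso} and the $2n$-periodicity of $M$, whereas you first pass to the $2n$-periodicity of $\Tr_R M$ and then dimension-shift; both routes are valid and yield the same isomorphism.
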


\begin{proof}  Set $N=\Tr_R M$. Then it suffices to show that $N$ is free; see (\ref{tr}.1). Also, by our assumptions, we have that $N$ is generically free, $\Tr_R N$ is $2n$-periodic, and $[N^{\ast}]=0$ in $\overline{\G}(R)_{\QQ}$; see (\ref{tr}.1).

Given an integer $i\geq 1$, we have the following isomorphisms:
\begin{align}\notag{}
\Ext^1_R(\Omega^{-i}_R N, N) & \cong \Ext^1_R(\Omega_R^{-i}\Tr_R M, \Tr_R M) \\ & \cong \Ext^1_R(\Tr_R \Omega_R^{i}M, \Tr_R M) \notag{} \\ & \cong \Ext^1_R(M, \Omega_R^i M) \notag{} \\ & \cong \Ext^1_R(\Omega^{2n}_R M, \Omega_R^i M) \tag{\ref{maincor}.1} \\ & \cong \Ext^{2n+1}_R(M, \Omega_R^i M) \notag{} \\ & \cong \Ext^{2n-i+1}_R(M, M) \notag{} 
\end{align}
Here, in (\ref{maincor}.1), the second isomorphism follows from \ref{c3}(iii), namely since $\Omega^{-i}_R \Tr_R M \cong \Tr_R \Omega_R^i M$, the third and the sixth isomorphisms follow from \ref{extiso}, and the forth isomorphism holds since $M$ is $2n$-periodic. Hence, $\Ext^1_R(\Omega^{-i}_R N, N)=0$ for all $i=4, 6, \ldots, 2n$ since $\Ext^i_R(M,M)$ for all $i=1,3, \ldots, 2n-3$. Consequently, $N$ satisfies conditions (i), (ii), (iii), and (iv) of Theorem \ref{mainthmnew}.

Now assume (1) holds. Then $N$ is a vector bundle since so is $M$. As $M$ is generically free, we may assume $\depth(R)\geq 1$. Thus, $\depth_R(N)=\depth(R)\geq 1$ because $N$ is totally reflexive.  As we assume $\Ext^{2n-1}_R(M, M)=0$, (\ref{maincor}.1) implies that $\Ext^{1}_R(\Omega^{-2}_R N, N)=0$. So, $N$ is free due to part (1) of Theorem \ref{mainthmnew}.

Next assume (2) holds. Then $\Omega_R^2N \otimes_R \Tr_R N$ is torsion-free since it is isomorphic to a finite direct sum of copies of $M^{\ast} \otimes_RM$, $M$, $M^{\ast}$ and free $R$-modules; see (\ref{tr}.1). As $\Tr_R N \overset{st}{\cong} M$ is totally reflexive, we have that $\Ext^i_R(\Tr_R N, R)=0$ for all $i\geq 1$. 
Hence, $N$ is free due to part (2) of Theorem \ref{mainthmnew}.
\end{proof}

\begin{proof}[Proof of Parts (1) and (2) of Theorem \ref{yenithm}] First assume $\Ext^{2n-1}_R(M,M)=0$. We may assume, by induction on $d$, that $M$ is a vector bundle. Hence, $M$ is free by Corollary \ref{maincor}(1).

Next assume $M\otimes_RM^{\ast}$ is torsion-free. As $R$ is Gorenstein, it has no embedded primes so that $M$ is free by Corollary \ref{maincor}(2).
\end{proof}

The hypothesis that $M$ is generically free, or $[M]=0$ in $\overline G(R)_{\mathbb Q}$, is necessary for Corollary \ref{maincor}. We highlight this fact with the next examples:

\begin{eg} \label{ex1} $\phantom{}$
\begin{enumerate}[\rm(i)]
\item Let $R=k[\![x,y]\!]/(x^2)$ and $M=R/(x)$. Then $M$ is nonfree, $2$-periodic, rigid, and $[M]=0$ in $\overline{\G}(R)_{\QQ}$. Moreover, $\fp=(x) \in \Ass(R)$ and $M_{\fp}$ is not free over $R_{\fp}$. Hence, $M$ is not generically free.
\item Let $R=k[\![x,y]\!]/(xy)$ and $M=R/(x)$. Then $M$ is nonfree, $2$-periodic, and rigid. Moreover, $M$ is generically free since $R$ is reduced. On the other hand, $[M]\neq 0$ in $\overline{\G}(R)_{\QQ}$: One way to see this is to set $N=R/(y)$, note $\Tor_1^R(M,N)=0\neq \Tor_2^R(M,N)$, and use \cite[2.8 and 3.1]{Da1} (in fact it follows that $\G(R)=\ZZ\cdot [M] \oplus \ZZ\cdot [N]$ and $\overline{\G}(R)=\ZZ\cdot [M]$).
\end{enumerate}
\end{eg}

Next we prepare a lemma and subsequently use it to prove Theorem \ref{mainthmnew}.

\begin{lem} \label{mainlemma} Let $M$ be an $R$-module and let $n\geq 1$. Assume:
\begin{enumerate}[\rm(i)]
\item $[M^{\ast}]=0$ in $\overline{\G}(R)_{\QQ}$.
\item $M$ is generically free. .
\item $\Tr_R M$ is $2n$-periodic.
\item $\Tor_i^R(M, \Tr_R M)=0$ for all $i=2, 4, \ldots, 2n$.
\end{enumerate}
Then $M$ is free.
\end{lem}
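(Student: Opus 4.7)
By Yoshino's criterion (\ref{Tor2}(i)) it suffices to prove that $\Tor_1^R(M,\Tr_R M)=0$. My plan is to first reduce by induction on $\dim R$ to the case where $M$ is a vector bundle, then convert the even-indexed Tor-vanishing in hypothesis (iv) into Ext-vanishings on $M$ that yield rigidity of $M$, and finally invoke the Tor-rigidity supplied by \ref{teta} to close.

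The hypotheses descend to localizations at primes $\fp\neq\fm$, so by induction on $\dim R$ we may assume $M$ is a vector bundle; then $\Tr_R M$ is also a vector bundle, and each $\Ext^i(\Tr_R M,R)$ has finite length. After a brief separate argument ensuring $\depth_R M\ge 1$ (the depth-zero case of $R$ itself follows immediately from hypothesis (ii), as generic freeness forces $M$ free when $\fm\in\Ass(R)$), part (iv) of \ref{Tor2} kills the $\Hom$-term in the exact sequence of \ref{Tor2}(v), producing the isomorphism $\Ext^1(\Omega^{-i}_R M,M)\cong \Tor_i^R(\Tr_R M,M)$ for every $i\ge 1$. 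Hypothesis (iv) then yields $\Ext^1(\Omega^{-i}_R M,M)=0$ for $i=2,4,\ldots,2n$. Applying $\Tr_R$ to hypothesis (iii) and using \ref{c3}(iii) produces the stable isomorphism $\Omega^{-2n}_R M\overset{st}{\cong} M$, and iterating the cosyzygy dimension shift $\Ext^j(\Omega^{-i}_R M,M)\cong \Ext^{j+1}(\Omega^{-(i+1)}_R M,M)$ (valid for $j\ge 1$) translates these vanishings into $\Ext^1(\Omega^{-i}_R M,M)\cong \Ext^{2n-i+1}(M,M)$. This forces $\Ext^j(M,M)=0$ for every odd $j\in\{1,3,\ldots,2n-1\}$; in particular $\Ext^1(M,M)=0$, so $M$ is rigid.

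For the closing step I would combine this rigidity with the Tor-rigidity from \ref{teta}. Since $\Tr_R M$ is $2n$-periodic, has finite projective dimension on the punctured spectrum (being a vector bundle), and has class $[\Tr_R M]=[M^{\ast}]=0$ in $\overline{\G}(R)_{\QQ}$, \ref{teta} delivers Tor-rigidity of every pair $(\Tr_R M,Y)$ with $[Y]=0$. Taking $Y=\Tr_R\Omega_R M\overset{st}{\cong}\Omega^{-1}_R\Tr_R M$ --- whose class equals $-[\Tr_R M]=0$ by a syzygy computation analogous to the one in the proof of Proposition \ref{2-periodic-prop} --- yields Tor-rigidity of $(\Tr_R M,\Tr_R\Omega_R M)$. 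The heart of the argument is then to transfer this Tor-rigidity to the pair $(M,\Tr_R\Omega_R M)$, after which \ref{Tor2}(ii) (combined with the rigidity of $M$ established above) forces $M$ to be free. The main obstacle is precisely this transfer: because $M$ itself is not a priori $2n$-periodic, \ref{teta} cannot be applied to a pair with $M$ in the first slot, so one must route through the transpose using the Auslander--Bridger exact sequences in \ref{Tor2}(iii) and (v) together with the derived rigidity of $M$ and the stable equivalences supplied by \ref{c3}.
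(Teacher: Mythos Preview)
Your argument has a genuine gap at the point where you invoke \ref{teta}. That result requires the periodic module in the first slot to be \emph{$2$-periodic}, not merely $2n$-periodic; the dangling phrase ``for some $n\ge 1$'' in its statement is a typo, as confirmed by every application of \ref{teta} in the paper (all of which feed in a $2$-periodic module). So ``\ref{teta} delivers Tor-rigidity of every pair $(\Tr_R M,Y)$'' is not justified when $n>1$, and the whole closing paragraph collapses. You also explicitly leave the ``transfer'' of Tor-rigidity from $(\Tr_R M,\Tr_R\Omega_R M)$ to $(M,\Tr_R\Omega_R M)$ as an unresolved obstacle; that is not a minor detail but the entire content of what remains.

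The paper's proof avoids both problems by a single direct-sum trick and never leaves the Tor side at all. Set
\[
X \;=\; \Tr_R M \oplus \Omega_R^{2}\Tr_R M \oplus \cdots \oplus \Omega_R^{2n-2}\Tr_R M.
\]
Because $\Tr_R M$ is $2n$-periodic, $X$ is genuinely $2$-periodic, is locally of finite projective dimension on the punctured spectrum, and has $[X]=0$ in $\overline{\G}(R)_{\QQ}$ (each summand has the same class as $M^{\ast}$ up to free pieces). Now \ref{teta} applies to the pair $(X,M)$. Hypothesis (iv) gives $\Tor_2^R(X,M)=0$, Tor-rigidity gives $\Tor_3^R(X,M)=0$, and the summand $\Omega_R^{2n-2}\Tr_R M$ of $X$ yields $\Tor_1^R(\Tr_R M,M)\cong\Tor_3^R(\Omega_R^{2n-2}\Tr_R M,M)=0$, so $M$ is free by \ref{Tor2}(i). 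Your detour through $\Ext^1(\Omega^{-i}_R M,M)$ and rigidity of $M$ is unnecessary; the missing idea is precisely to manufacture a $2$-periodic module out of the even syzygies of $\Tr_R M$ so that \ref{teta} becomes available.
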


\begin{proof} We may assume, by induction on the dimension of $R$, that $R$ has positive dimension and $M_{\fp}$ is free for all $\fp \in \Spec(R)-\{\fm\}$. Set $X=\Tr_R M \oplus \Omega^2 \Tr_R M \oplus \Omega^4 \Tr_R M \oplus \cdots \oplus \Omega^{2n-2}\Tr_R M$. Then $X$ is $2$-periodic since $\Tr_R M$ is $2n$-periodic. Note also that $\pd_{R_{\fp}}(X_{\fp})<\infty$ for all $\fp \in \Spec(R)-\{\fm\}$ and $[X]=0$ in $\overline{\G}(R)_{\QQ}$. Hence, it follows from \ref{teta} that the pair $(X,M)$ is Tor-rigid. 

Suppose $\Tor_i^R(\Tr_R M, M) =0$ for all $i=2,4, \ldots, 2n$. Then $\Tor_2^R(X,M)=0$, and so $\Tor_3^R(X,M)=0$ since $(X,M)$ is Tor-rigid. Note that $\Tor_1^R(M, \Tr_R M) \cong \Tor_1^R(\Omega^{2n}_R \Tr_R M, M) \cong \Tor_3^R(\Omega^{2n-2}\Tr_R M,M)$, and $\Tor_3^R(\Omega^{2n-2}\Tr_R M,M)=0$ as it is a direct summand of $\Tor_3^R(X,M)$. Consequently $\Tor_1^R(M, \Tr_R M)=0$ and so $M$ is free; see \ref{Tor2}(i).
\end{proof}

Next is the proof of the main result of this section:

\begin{proof}[Proof of Theorem \ref{mainthmnew}] $\phantom{}$

(1) In view of Lemma \ref{mainlemma}, it suffices to prove $\Tor_i^R(M, \Tr_R M)=0$ for all $i=2, 4,\ldots, 2n$. Assume first $\depth_R(M)\geq 1$ and $\Ext^1_R(\Omega^{-2}_R M, M)=0$. Note that $\Ext^i_R(\Tr_R M, R)$ has finite length for each $i\geq 1$ since $M$ is a vector bundle. Thus $\Hom_R(\Ext^i_R(\Tr_R M, R), M)=0$ for all $i\geq 1$; see \ref{Tor2}(iv). As we assume $\Ext^1_R(\Omega^{-i}_R M, M)=0$ for all $i=2,4, \ldots, 2n$, the exact sequence \ref{Tor2}(v) yields $\Tor_i^R(M, \Tr_R M)=0$ for all $i=2,4, \ldots, 2n$. 

Next assume $\depth_R(M\otimes_RM^{\ast})\geq 1$ and $\Ext^i_R(\Tr_R M, R)=0$ for all $i=4,6, \ldots, 2n$. As we assume $\Ext^1_R(\Omega^{-i}_R M, M)=0$ for all $i=4, \ldots, 2n$, the exact sequence \ref{Tor2}(v) yields the vanishing of $\Tor_i^R(M, \Tr_R M)$ for all $i=4,6, \ldots, 2n$. Note that $\Tor_2^R(M, \Tr_R M)$ has finite length as $M$ is a vector bundle. So \ref{Tor2}(iii) implies that $\Tor_2^R(M, \Tr_R M)=0$ as $\depth_R(M\otimes_RM^{\ast})\geq 1$. Hence we obtain $\Tor_i^R(M, \Tr_R M)=0$ for all $i=2,4, \ldots, 2n$.

(2) Set $d=\dim(R)$. If $d=0$, then $\depth(R)=0$ and so $M$ is free since $M$ is generically free. Due to our assumption in part (a), the localization of a torsion-free module is torsion-free; see \cite[3.8]{YYNE}. Hence, by induction on $d$, we may assume $M$ is a vector bundle and $\depth(R)\geq 1$. Then \ref{Tor2}(iii) implies that $\Tor_2^R(M, \Tr_R M)=0$ since $\depth_R(\Omega_R^2M \otimes_R \Tr_R M)\geq 1$.

Assume $\Ext^1_R(\Tr_R M,R)=0$. Then $M$ is torsionless so that $\depth_R(M)\geq 1$. Moreover \ref{Tor2}(v) yields an injection $\Ext^1_R(\Omega^{-2}_R M, M)  \hookrightarrow \Tor_2^R(\Tr_R M, M)$, which implies that $\Ext^1_R(\Omega^{-2}_R M, M)=0$. Therefore, $M$ is free by part (1) of the theorem.

Next assume $\Ext^i_R(\Tr_R M,R)=0$ for all $i=4,6, \ldots, 2n$. Then \ref{Tor2}(v) shows that $\Tor_i^R(M, \Tr_R M)=0$ for all $i=4,6, \ldots, 2n$
because we assume the vanishing of $\Ext^1_R(\Omega^{-i}_R M, M)=0$ for all $i=4,6, \ldots, 2n$. As $\Tor_2^R(M, \Tr_R M)=0$, we conclude that  $\Tor_i^R(M, \Tr_R M)=0$ for all $i=2, 4,\ldots, 2n$. Consequently, $M$ is free by Lemma \ref{mainlemma}.
\end{proof}

We proceed to prove the extension of \cite[1.3]{Fukano} mentioned in the introduction; see Proposition \ref{corGor}. First we prepare a lemma.

\begin{lem} \label{lemsupp} Assume $\depth(R)\geq 1$ and let $M$ be an $R$-module. Assume $M$ is totally reflexive $R$-module which is $t$-periodic for some $t\geq 2$.  If $\Ext^{t-1}_R(M,M)=0$, then $\depth_R(M\otimes_RM^{\ast})\geq 1$. Also, if $M$ is a vector bundle and $\depth_R(M\otimes_RM^{\ast})\geq 1$, then $\Ext^{t-1}_R(M,M)=0$.
\end{lem}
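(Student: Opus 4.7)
The plan is to reduce both implications of the lemma to a single computation, namely the identification
\begin{equation*}
\Tor_2^R(M, \Tr_R M) \;\cong\; \Ext_R^{t-1}(M, M).
\end{equation*}
First I will verify this isomorphism. Since $M$ is totally reflexive, so is $\Tr_R M$, and hence $\Ext_R^i(\Tr_R M, R)=0$ for every $i \geq 1$. The exact sequence from \ref{Tor2}(v) with $i=2$ then collapses to $\Tor_2^R(\Tr_R M, M) \cong \Ext_R^1(\Omega_R^{-2} M, M)$. The periodicity hypothesis together with \ref{c3}(iv) supplies $\Omega_R^{-2} M \overset{st}{\cong} \Omega_R^{-2}\Omega_R^t M \overset{st}{\cong} \Omega_R^{t-2} M$, and since $t \geq 2$ the standard dimension shift yields $\Ext_R^1(\Omega_R^{t-2} M, M) \cong \Ext_R^{t-1}(M, M)$, which is exactly where the hypothesis $t \ge 2$ plays its role.

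Armed with the identification above, I will next exploit the four-term exact sequence of \ref{Tor2}(iii):
\begin{equation*}
0 \to \Tor_2^R(M, \Tr_R M) \to M\otimes_R M^{\ast} \to \Hom_R(M,M) \to \Tor_1^R(M, \Tr_R M) \to 0.
\end{equation*}
For the forward implication, the vanishing of $\Ext^{t-1}_R(M, M)$ yields $\Tor_2^R(M, \Tr_R M)=0$, and hence an injection $M \otimes_R M^{\ast} \hookrightarrow \Hom_R(M, M)$. As an auxiliary step I will check that $\depth_R \Hom_R(M, M) \geq 1$: since $M$ is torsionless, there is an embedding $M \hookrightarrow F$ into a free module, giving $\Hom_R(M, M) \hookrightarrow \Hom_R(M, F) \cong (M^{\ast})^{\rk F}$; the right-hand side has depth equal to $\depth(R) \ge 1$ because $M^{\ast}$ is totally reflexive, and the associated primes of a submodule are contained in those of the ambient module, so $\depth_R \Hom_R(M, M) \ge 1$ as well. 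The depth lemma applied to the short exact sequence $0 \to M \otimes_R M^{\ast} \to \Hom_R(M, M) \to \Tor_1^R(M, \Tr_R M) \to 0$ then delivers $\depth_R(M \otimes_R M^{\ast}) \geq 1$.

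For the converse, assume $M$ is a vector bundle and $\depth_R(M \otimes_R M^{\ast}) \geq 1$. Local freeness on the punctured spectrum makes $\Tor_2^R(M, \Tr_R M)$ a module of finite length. The four-term sequence embeds this finite-length module into $M \otimes_R M^{\ast}$, but a module of positive depth admits no nonzero finite-length submodule. Hence $\Tor_2^R(M, \Tr_R M) = 0$, and the identification immediately forces $\Ext^{t-1}_R(M, M) = 0$. The main technical point to watch is the careful handling of the cosyzygy $\Omega_R^{-2} M$ and its conversion, via periodicity, to the honest syzygy $\Omega_R^{t-2} M$; total reflexivity together with the properties listed in \ref{c3} makes this transition routine, and everything else is a formal application of the four-term sequence, the depth lemma, and the vector bundle hypothesis.
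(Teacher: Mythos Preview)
Your proof is correct and follows essentially the same route as the paper: identify $\Tor_2^R(M,\Tr_R M)$ with $\Ext_R^{t-1}(M,M)$ via \ref{Tor2}(v) and periodicity, then read both implications off the four-term sequence \ref{Tor2}(iii) together with $\depth_R\Hom_R(M,M)\geq 1$. The paper is terser (it cites $\depth_R(M)=\depth(R)\geq 1$ directly for the $\Hom$ depth and only uses $\Ext_R^2(\Tr_R M,R)=0$ rather than all $i\geq 1$), but the argument is the same.
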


\begin{proof} Note that $\Ext^{t-1}_R(M,M) \cong \Ext^{1}_R(\Omega_R ^{t-2}M,M) \cong \Ext^{1}_R(\Omega_R ^{-2}M,M)$ since $M$ is $t$-periodic. Note also that $\Ext^{2}_R(\Tr_R M,R)=0$ since $M$ is reflexive. Hence, by letting $i=2$, we obtain from the exact sequence in \ref{Tor2}(v) that $\Ext^{t-1}_R(M,M) \cong \Tor_2^R(\Tr_R M, M)$.

It follows that $\depth_R\big(\Hom_R(M,M)\big)\geq 1$ since $\depth_R(M)=\depth(R)\geq 1$. Hence the claims follow from the exact sequence in \ref{Tor2}(iii).
\end{proof}

\begin{cor} \label{cortr} Let $M$ be a totally reflexive $R$-module which is a vector bundle and $2n$-periodic for some $n\geq 1$. Assume $\depth(R)\geq 1$. Assume further:
\begin{enumerate}[\rm(i)]
\item $[M]=0$ in $\overline{\G}(R)_{\QQ}$.
\item $\depth_R(M\otimes_RM^{\ast})\geq 1$
\item $\Ext^i_R(M,M)=0$ for all $i=1,3, \ldots, 2n-3$.
\end{enumerate}
Then $M$ is free.
\end{cor}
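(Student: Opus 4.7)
The plan is to reduce \Cref{cortr} to \Cref{maincor}(1). All four numbered hypotheses of \Cref{maincor} are either assumed outright or easily checked, and the only additional ingredient needed is the vanishing $\Ext^{2n-1}_R(M,M)=0$, which the depth hypothesis on $M\otimes_R M^{\ast}$ supplies via \Cref{lemsupp}.

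First, I would verify that $M$ is generically free. Since $\depth(R)\geq 1$, the maximal ideal $\fm$ is not associated to $R$, so every $\fp\in\Ass(R)$ satisfies $\fp\in\Spec(R)-\{\fm\}$. The vector bundle hypothesis then gives $M_\fp$ free over $R_\fp$ for every such $\fp$, which is exactly generic freeness.

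Next, I would apply \Cref{lemsupp} with $t=2n\geq 2$: $M$ is totally reflexive, a vector bundle, and $2n$-periodic, and by hypothesis $\depth_R(M\otimes_R M^{\ast})\geq 1$. The second half of \Cref{lemsupp} yields
\[
\Ext^{2n-1}_R(M,M)=0.
\]

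Finally, I would invoke \Cref{maincor}(1). Conditions (i)--(iv) of that corollary are satisfied: (i) and (iii) are hypotheses of \Cref{cortr}, (ii) is the generic freeness just verified, and (iv) is hypothesis (iii) of \Cref{cortr} (vacuous when $n=1$). Together with the vector bundle assumption and the vanishing $\Ext^{2n-1}_R(M,M)=0$ obtained above, \Cref{maincor}(1) applies and forces $M$ to be free.

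No serious obstacle is expected; the argument is essentially bookkeeping. The only subtle point is checking that the depth hypothesis on $M\otimes_R M^{\ast}$ indeed translates into $\Ext$-vanishing in the correct degree, and this has been encapsulated in \Cref{lemsupp} (whose proof uses that $M$ being $2n$-periodic makes $\Ext^{2n-1}_R(M,M)\cong\Ext^1_R(\Omega^{-2}_R M,M)$, which in turn is related to $\Tor_2^R(\Tr_R M,M)$ through \ref{Tor2}(v)).
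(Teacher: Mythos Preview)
Your proposal is correct and follows essentially the same approach as the paper: apply \Cref{lemsupp} to obtain $\Ext^{2n-1}_R(M,M)=0$, then invoke \Cref{maincor}(1). The paper's proof is simply a terser version of yours, omitting the explicit check of generic freeness that you carry out.
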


\begin{proof} We have, by Lemma \ref{lemsupp}, that $\Ext^{t-1}_R(M,M)=0$. Hence the claim follows from Corollary \ref{maincor}(1).
\end{proof}

\begin{prop} \label{corGor} Let $R$ be a Gorenstein local ring and let $M$ be a $2$-periodic $R$-module. Assume $M$ is generically free and $[M]=0$ in $\overline{\G}(R)_{\QQ}$. If $M\otimes_RM^{\ast}$ is torsion-free, then $M$ is free.
\end{prop}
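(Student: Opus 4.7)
The plan is to recognize that Proposition \ref{corGor} is essentially the $n=1$ instance of Corollary \ref{maincor}(2), so the proof should reduce to verifying that the hypotheses of that corollary are met.

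First I would argue that $M$ is totally reflexive. Since $M$ is $2$-periodic, the discussion in \ref{stiso} gives (after deleting free summands) $M \cong \Omega_R^2 M$ and, in particular, $\depth_R(M) \geq \depth(R)$, so $M$ is maximal Cohen--Macaulay. Because $R$ is Gorenstein, every maximal Cohen--Macaulay module is totally reflexive, so $M$ is totally reflexive.

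Next I would check the remaining hypotheses of Corollary \ref{maincor} with $n=1$: condition (i) is $[M]=0$ in $\overline{\G}(R)_{\QQ}$, which is one of our assumptions; condition (ii) is that $M$ is generically free, also assumed; condition (iii) is that $M$ is $2n$-periodic, which holds with $n=1$; and condition (iv) asks for the vanishing of $\Ext^i_R(M,M)$ for $i=1,3,\ldots,2n-3$, which is vacuous when $n=1$. For the hypothesis of clause (2) of Corollary \ref{maincor}, note that the Gorenstein ring $R$ has no embedded primes (indeed, $R$ is Cohen--Macaulay, so $\Ass(R)$ consists entirely of minimal primes), and $M \otimes_R M^{\ast}$ is assumed to be torsion-free. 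Therefore Corollary \ref{maincor}(2) applies and gives that $M$ is free.

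The main intellectual content has already been absorbed into Corollary \ref{maincor}; there is no substantive obstacle here beyond checking that the Gorenstein hypothesis upgrades ``generically free $2$-periodic'' to ``generically free, totally reflexive, $2$-periodic.'' The only point that deserves a brief mention in the write-up is why the ``no embedded primes'' condition comes for free from the Gorenstein assumption.
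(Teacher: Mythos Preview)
Your proof is correct. It applies Corollary~\ref{maincor}(2) directly with $n=1$, after observing that a $2$-periodic module over a Gorenstein ring is maximal Cohen--Macaulay (via \ref{stiso}) and hence totally reflexive, and that a Gorenstein ring has no embedded primes. This is precisely the route the paper takes in its proof of Theorem~\ref{yenithm}(2), so there is nothing to object to.

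The paper's own proof of Proposition~\ref{corGor}, however, follows a slightly different path: it inducts on $\dim R$ to reduce to the case where $M$ is a vector bundle and $\depth(R)\ge1$, then invokes Corollary~\ref{cortr}, which in turn uses Lemma~\ref{lemsupp} to convert the depth hypothesis $\depth_R(M\otimes_R M^{\ast})\ge1$ into the vanishing $\Ext^{2n-1}_R(M,M)=0$ and finishes with Corollary~\ref{maincor}(1) rather than~(2). The two arguments ultimately feed into the same machinery (Theorem~\ref{mainthmnew} via Corollary~\ref{maincor}), but yours enters through clause~(2) while the paper's enters through clause~(1) after the extra reduction step. Your route is shorter and avoids Lemma~\ref{lemsupp} and Corollary~\ref{cortr} entirely; the paper's route has the mild advantage of illustrating that Corollary~\ref{cortr} already suffices in the vector-bundle case, which is the point of that corollary's placement in the text.
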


\begin{proof} We proceed by induction on the dimension $d$ of $R$. There is nothing to prove if $d=0$. We may assume, by the induction hypothesis, that $d\geq 1$ and $M$ is a vector bundle. Then Corollary \ref{cortr} implies that $M$ is free; see \ref{gglist}(iv).
\end{proof}

Note that Proposition \ref{corGor} follows from Corollary \ref{2-periodic-prop-cor} when $R$ is one-dimensional; see \ref{Tor2}(vi). Note also that the conclusion of the proposition can fail if the module in question is not $2$-periodic:

\begin{eg} [{\cite[3.5]{CeD2}}] Let $R=k[\![x,y,z]\!]/(xy-z^2)$. Then $R$ is a two-dimensional normal domain and $\overline{\G}(R)_{\QQ}=0$; see \ref{gglist}(iii). If $I$ is the ideal of $R$ generated by $x$ and $y$, then $I^{\ast} \cong R$ so that $I \otimes_R I^{\ast} \cong I$ is torsion-free. Moreover, $\pd_R(I)=1$, $I$ is not a periodic $R$-module, and $\Ext^1_R(I,I)\neq 0$.
\end{eg}

\section{Acknowledgements}
Part of this work was completed during the visit of Takahashi to the School of Mathematical and Data Sciences at West Virginia University in May 2023.

The authors thank Brian Laverty and Uyen Le for their comments on previous versions of the manuscript, and Toshinori Kobayashi for discussions and help with Theorem \ref{introFukano}.


\begin{thebibliography}{10}
\bibliographystyle{plain}

\bibitem{Ar}
Tokuji Araya.
\newblock The {A}uslander-{R}eiten conjecture for {G}orenstein rings.
\newblock {\em Proc. Amer. Math. Soc.}, 137(6):1941--1944, 2009.

\bibitem{Au}
Maurice Auslander.
\newblock Modules over unramified regular local rings.
\newblock {\em Illinois J. Math.}, 5:631--647, 1961.

\bibitem{AuBr}
Maurice Auslander and Mark Bridger.
\newblock {\em Stable module theory}.
\newblock Mem. Amer. Math. Soc., No. 94. American Mathematical Society,
  Providence, R.I., 1969.

\bibitem{AuRe}
Maurice Auslander and Idun Reiten.
\newblock On a generalized version of the {N}akayama conjecture.
\newblock {\em Proc. Amer. Math. Soc.}, 52:69--74, 1975.

\bibitem{BH}
Winfried Bruns and J{{\"u}}rgen Herzog.
\newblock {\em Cohen-{M}acaulay rings}, volume~39 of {\em Cambridge Studies in
  Advanced Mathematics}.
\newblock Cambridge University Press, Cambridge, 1993.

\bibitem{CeD}
Olgur Celikbas and Hailong Dao.
\newblock Asymptotic behavior of {E}xt functors for modules of finite complete
  intersection dimension.
\newblock {\em Math. Z.}, 269(3):1005--1020, 2011.

\bibitem{CeD2}
Olgur Celikbas and Hailong Dao.
\newblock Necessary conditions for the depth formula over {C}ohen-{M}acaulay
  local rings.
\newblock {\em J. Pure Appl. Algebra}, 218(3):522--530, 2014.

\bibitem{Fukano}
Olgur Celikbas, Le~Uyen, Hiroki Matsui, and Arash Sadeghi.
\newblock Remarks on a conjecture of {H}uneke and {W}iegand and the vanishing
  of (co)homology.
\newblock {\em J. Math. Soc. Japan}, 76(3):951--974, 2024.

\bibitem{Gdimbook}
Lars~Winther Christensen.
\newblock {\em Gorenstein dimensions}, volume 1747 of {\em Lecture Notes in
  Math.}
\newblock Springer-Verlag, Berlin, 2000.

\bibitem{Constapel}
Petra Constapel.
\newblock Vanishing of {T}or and torsion in tensor products.
\newblock {\em Communications in Algebra}, 24(3):833--846, 1996.

\bibitem{Da1}
Hailong Dao.
\newblock Decent intersection and {T}or-rigidity for modules over local
  hypersurfaces.
\newblock {\em Trans. Amer. Math. Soc.}, 365(6):2803--2821, 2013.

\bibitem{Lsurvey}
Hailong Dao.
\newblock Some homological properties of modules over a complete intersection,
  with applications.
\newblock In {\em Commutative algebra}, pages 335--371. Springer, New York,
  2013.

\bibitem{YYNE}
Neil Epstein and Yongwei Yao.
\newblock Criteria for flatness and injectivity.
\newblock {\em Math. Z.}, 271(3-4):1193--1210, 2012.

\bibitem{EG}
E.~Graham Evans and Phillip Griffith.
\newblock {\em Syzygies}, volume 106 of {\em London Mathematical Society
  Lecture Note Series}.
\newblock Cambridge University Press, Cambridge, 1985.

\bibitem{GP}
Vesselin~N. Gasharov and Irena~V. Peeva.
\newblock Boundedness versus periodicity over commutative local rings.
\newblock {\em Trans. Amer. Math. Soc.}, 320(2):569--580, 1990.

\bibitem{GTAR}
Shiro Goto and Ryo Takahashi.
\newblock On the {A}uslander--{R}eiten conjecture for {C}ohen--{M}acaulay local
  rings.
\newblock {\em Proceedings of the American Mathematical Society},
  145(8):3289--3296, 2017.

\bibitem{HSW}
Craig Huneke, Srikanth Iyengar, and Roger Wiegand.
\newblock Rigid ideals in {G}orenstein rings of dimension one.
\newblock {\em Acta Mathematica Vietnamica}, 44(1):31--49, 2019.

\bibitem{HJ}
Craig Huneke and David~A. Jorgensen.
\newblock Symmetry in the vanishing of {E}xt over {G}orenstein rings.
\newblock {\em Math. Scand.}, 93(2):161--184, 2003.

\bibitem{HW1}
Craig Huneke and Roger Wiegand.
\newblock Tensor products of modules and the rigidity of {T}or.
\newblock {\em Math. Ann.}, 299(3):449--476, 1994.

\bibitem{OY}
Osamu Iyama and Yuji Yoshino.
\newblock Mutation in triangulated categories and rigid {C}ohen--{M}acaulay
  modules.
\newblock {\em Invent. Math.}, 172:117--168, 2008.

\bibitem{Jot}
P.~Jothilingam.
\newblock A note on grade.
\newblock {\em Nagoya Math. J.}, 59:149--152, 1975.

\bibitem{KK}
Kaito Kimura.
\newblock Auslander-{R}eiten conjecture for normal rings.
\newblock {\em Preprint, posted at arXiv:2304.03956}, 2023.

\bibitem{KYT}
Kaito Kimura, Yuya Otake, and Ryo Takahashi.
\newblock Maximal {C}ohen-{M}acaulay tensor products and vanishing of {E}xt
  modules.
\newblock {\em Bull. Lond. Math. Soc.}, 54(6):2456--2468, 2022.

\bibitem{Enni}
Uyen Huyen~Thao Le.
\newblock {\em Studies on Depth and Torsion in Tensor Products of Modules}.
\newblock PhD thesis, West Virginia University,
  https://researchrepository.wvu.edu/etd/12219/, 2023.

\bibitem{TheBook}
Graham~J. Leuschke and Roger Wiegand.
\newblock {\em Cohen-{M}acaulay representations}, volume 181 of {\em Math.
  Surveys Monogr.}
\newblock American Mathematical Society, Providence, RI, 2012.

\bibitem{Li}
Stephen Lichtenbaum.
\newblock On the vanishing of {T}or in regular local rings.
\newblock {\em Illinois J. Math.}, 10:220--226, 1966.

\bibitem{Lindo2}
Haydee Lindo.
\newblock Self-injective commutative rings have no nontrivial rigid ideals.
\newblock {\em Preprint; posted at arXiv:1710.01793}, 2017.

\bibitem{TakGS}
Ryo Takahashi.
\newblock Syzygy modules with semidualizing or {G}-projective summands.
\newblock {\em J. Algebra}, 295(1):179--194, 2006.

\bibitem{Yass}
Siamak Yassemi.
\newblock G-dimension.
\newblock {\em Math. Scand.}, 77(2):161--174, 1995.

\bibitem{Yo}
Yuji Yoshino.
\newblock {\em Cohen-{M}acaulay modules over {C}ohen-{M}acaulay rings}, volume
  146 of {\em London Mathematical Society Lecture Note Series}.
\newblock Cambridge University Press, Cambridge, 1990.

\end{thebibliography}
\end{document}